% SIAM Article Template
\documentclass[review]{siamart}

% Here I added ex_shared.tex directly.
% And remove the packages that I do not need.
% Packages and macros go here
%\usepackage{showlabels}
\usepackage[shortlabels]{enumitem}
\usepackage{bm}
\usepackage{amssymb}
\usepackage{mathtools}
\usepackage{pgf,tikz}
\usetikzlibrary{shapes.geometric}
\usetikzlibrary{arrows}
\usetikzlibrary{shapes.misc}
\tikzset{cross/.style={cross out, draw=black, minimum size=2*(#1-\pgflinewidth), inner sep=0pt, outer sep=0pt}, cross/.default={3pt}}
\usepackage{etoolbox}
\usetikzlibrary{shapes.geometric}
\usetikzlibrary{arrows}
\usepackage{amsfonts}
\usepackage{float}
\usepackage{graphicx}
\usepackage{epstopdf}
\usepackage{subcaption}
\usepackage{csquotes}
\ifpdf
\DeclareGraphicsExtensions{.eps,.pdf,.png,.jpg}
\else
\DeclareGraphicsExtensions{.eps}
\fi
\makeatletter
\patchcmd{\@addmarginpar}{\ifodd\c@page}{\ifodd\c@page\@tempcnta\m@ne}{}{}
\makeatother
\reversemarginpar

\usepackage{hyperref}

\graphicspath{{./FIGS/}}
% Color packages and definition for corrections;
\usepackage{color}
\definecolor{Pratik}{rgb}{1, 0, 0}
\definecolor{Tommaso}{rgb}{0, 0, 1}
\definecolor{Faycal}{rgb}{0, 1, 0}
\definecolor{Martin}{rgb}{0.7, 0.5, 0}

\newcommand{\vect}[1]{\mathbf{#1}}
\usepackage{algorithm}
\usepackage{algorithmic}
\newcommand{\fv}{\vect{f}}
\newcommand{\uv}{\vect{u}}
\newcommand{\vv}{\vect{v}}
\newcommand{\wv}{\vect{w}}
\newcommand{\rv}{\vect{r}}
\newcommand{\ev}{\vect{e}}
\newcommand{\av}{\mathbf{a}}
\newcommand{\yv}{\mathbf{y}}
\newcommand{\tv}{\mathbf{t}}
\newcommand{\sign}{{\rm sign}}

\newcommand{\RAS}{\text{RAS}}
\newcommand{\SRAS}{\text{SRAS}}
\newcommand{\JF}{J_\mathcal{F}}
\newcommand{\JFS}{J_{\overline{\mathcal{F}}}}

\newcommand{\FtwoL}{\mathcal{F}_{2L}}
\newcommand{\FtwoLS}{\overline{\mathcal{F}}_{2L}}
% Declare title and authors, without \thanks
\newcommand{\TheTitle}{Linear and nonlinear substructured Restricted Additive Schwarz iterations and preconditioning} 
\newcommand{\TheAuthors}{F. Chaouqui, M. J. Gander, P. M. Kumbhar and T. Vanzan}

% Sets running headers as well as PDF title and authors
%\headers{A substructured formulation of RASPEN}{Chaouqui, Gander, Kumbhar and Vanzan}

% Title. If the supplement option is on, then "Supplementary Material"
% is automatically inserted before the title.
\title{{\TheTitle}}

% Authors: full names plus addresses.
\author{F. Chaouqui\thanks{Temple University
Philadelphia, USA ({\tt Faycal.Chaouqui@temple.edu})}, \and  M.J. Gander\thanks{Universit\'e de Gen\`eve, Switzerland ({\tt martin.gander@unige.ch})}, \and  P.M. Kumbhar\thanks{Karlsruher Institut f\"{u}r Technologie, Germany ({\tt pratik.kumbhar@kit.edu}),}
	\and T. Vanzan\thanks{CSQI Chair, \'{E}cole Polytecnique F\'{e}d\'{e}rale de Lausanne, Switzerland({\tt tommaso.vanzan@epfl.ch}).}}

% Optional PDF information
\ifpdf
\hypersetup{
	pdftitle={\TheTitle},
	pdfauthor={\TheAuthors}
}
\fi

% FundRef data to be entered by SIAM
%<funding-group>
%<award-group>
%<funding-source>
%<named-content content-type="funder-name"> 
%</named-content> 
%<named-content content-type="funder-identifier"> 
%</named-content>
%</funding-source>
%<award-id> </award-id>
%</award-group>
%</funding-group>

\begin{document}
	\maketitle

\begin{abstract}
Substructured domain decomposition (DD) methods have been
  extensively studied, and they are usually associated with
  nonoverlapping decompositions. We introduce here a
  substructured version of Restricted Additive Schwarz (RAS) which we
    call SRAS, and we discuss its advantages compared to the standard
  volume formulation of the Schwarz method when they are used both
    as iterative solvers and preconditioners for a Krylov
  method. To extend SRAS to nonlinear problems, we
  introduce SRASPEN (Substructured Restricted Additive Schwarz
  Preconditioned Exact Newton), where SRAS is used
  as a preconditioner for Newton's method. We study carefully the
  impact of substructuring on the convergence and performance of
  these methods as well as their implementations. We finally introduce
  two-level versions of nonlinear SRAS and SRASPEN. Numerical
  experiments confirm the advantages of formulating a Schwarz method
  at the substructured level.
  \end{abstract}
% REQUIRED
\begin{keywords}
   Substructured domain decomposition methods; Lions'
     parallel Schwarz method, Restricted Additive Schwarz (RAS);
   Linear and Nonlinear Preconditioning; GMRES.
\end{keywords}
	
% REQUIRED
\begin{AMS}
65N55, 65F08, 65F10, 65Y05		
\end{AMS}

%\MG{{\bf Language comments:} better 'RAS' than 'the RAS method',
%  'RASPEN' than 'the RASPEN method' etc. Also either 'the Forchheimer
%  equation' or 'Forchheimer's equation', 'the Newton method' or
%  'Newton's method' etc. 'Newton iterations' not 'Newton's
%  iterations'. I modified this systematically.}

\section{Introduction}
  We consider a boundary value problem posed in a Lipschitz domain
  $\Omega\subset \mathbb{R}^d$, $d\in\left\{1,2,3\right\}$,
\begin{equation}\label{eq:BVP}
\begin{aligned}
\mathcal{L}(u)&=f,\quad \text{in } \Omega,\\
u&=0,\quad \text{on }\partial \Omega.
\end{aligned}
\end{equation}
 We assume that \eqref{eq:BVP} admits a unique solution in some Hilbert space 
 $\mathcal{V}$.
 If the boundary value problem is linear, a discretization of \eqref{eq:BVP}  
 with $N_v$ degrees of freedom leads to a linear system
\begin{equation}\label{eq:linear}
A\uv=\fv,
\end{equation}
where $A\in \mathbb{R}^{N_v\times N_v}$, $\uv\in V(\cong \mathbb{R}^{N_v})$, and $\fv\in V$.
If the boundary value problem is nonlinear, we obtain a nonlinear system
\begin{equation}\label{eq:nonlinear}
F(\uv)=0,
\end{equation}
where $F:V\rightarrow V$ is a nonlinear function and $\uv\in V$.
Several numerical methods have been proposed in the last decades for
the efficient solution of such boundary value problems, e.g.,
multigrid methods \cite{hackbusch2013multi,trottenberg2000multigrid}
and domain decomposition (DD) methods
\cite{ToselliWidlund,quarteroni1999domain}.  We will focus on DD
methods, which are usually divided into two distinct classes, that is
overlapping methods, which include the AS (Additive Schwarz) and RAS
(Restricted Additive Schwarz) methods
\cite{ToselliWidlund,cai1999restricted}, and nonoverlapping methods
such as FETI (Finite Element Tearing and Interconnect) and
Neumann-Neumann methods
\cite{farhat1991method,mandel1996balancing,klawonn2001feti}.
Concerning nonlinear problems, DD methods can be applied either as
nonlinear iterative methods, that is by just solving nonlinear
problems in each subdomain and then exchanging information between
subdomains as in the linear case
\cite{lui1999schwarz,lui2003monotone,cai1994domain}, or as
preconditioners to solve the Jacobian linear system inside Newton
iteration. In the latter case, the term Newton-Krylov-DD is employed,
where DD is replaced by the domain decomposition preconditioner used
\cite{10.1007/978-3-319-52389-7_19}.

An alternative is to use a DD method as a preconditioner for Newton's
method.  Preconditioning a nonlinear system $F(\uv)=0$ means that we aim
to replace the original nonlinear system with a new nonlinear system,
still having the same solution, but for which the nonlinearities are
more balanced and Newton's method converges faster
\cite{cai2002nonlinearly,Gandernonlinear}.  Seminal contributions in
nonlinear preconditioning have been made by Cai and Keyes in
\cite{cai2002nonlinearly,cai2001nonlinear}, where they introduced
ASPIN (Additive Schwarz Preconditioned Inexact Newton), which is
a left preconditioner. Extensions of this idea to Dirichlet-Neumann, FETI-DP and BDDC are
presented in \cite{chaouqui2021nonlinear,klawonn2014nonlinear}.  The development of good
preconditioners is not an easy task even in the linear case. One
useful strategy is to study efficient iterative methods, and then to
use the associated preconditioners in combination with Krylov methods
\cite{Gandernonlinear}. The same logical path paved the way to the
development of RASPEN (Restricted Additive Schwarz Preconditioned
  Exact Newton) in \cite{dolean2016nonlinear}, which in short
  applies Newton's method to the fixed point equation
defined by the nonlinear RAS iteration at convergence. All
these methods are left preconditioners. Right preconditioners are
usually based on the concept of nonlinear elimination, presented in
\cite{lanzkron1996analysis}, and they are very efficient as shown in
\cite{gong2019nonlinear,gong2017nonlinear,cai2011inexact,doi:10.1137/19M1307184}.  While left preconditioners aim to
transform the original nonlinear function into a better behaved one,
right preconditioners aim to provide a better initial guess for the
next outer Newton iteration.

Nonoverlapping methods are sometimes called substructuring methods (a
term borrowed from Przemieniecki's work
\cite{przemieniecki1963matrix}), as in these methods the unknowns in
the interior of the nonoverlapping domains are eliminated through
static condensation so that one needs to solve a smaller system
involving only the degrees of freedom on the interfaces between the
nonoverlapping subdomains, like in the more recent hybridizable
  discontinuous Galerkin finite element methods, where each element
  forms a subdomain \cite{cockburn2004characterization}.  However, it
is also possible to write an overlapping method, such as Lions' Parallel
Schwarz Method (PSM), which is equivalent to RAS
  \cite{gander2008schwarz}, in substructured form, even though this
approach is much less common in the literature. For a two
subdomain decomposition, a substructuring procedure applied to the PSM
is carried out in \cite[Section 5]{gander2006optimized},
\cite[Section 3.4]{gander2012methodes} and \cite{ciaramella-vanzan2}, while for a many subdomain
decomposition with cross-points we refer to
\cite{ciaramella-vanzan,ciaramella-vanzan-spectral}. In this
particular framework, the substructured unknowns are now the degrees
of freedom located on the portions of a subdomain boundary that lie in
the interior of another subdomain; that is where the overlapping DD
method takes the information to compute the new
iterate. We emphasize that, at a given iteration $n$,
any iterative DD method (overlapping or nonoverlapping) needs only a
few values of $\uv^{n}$ to compute the new approximation $\uv^{n+1}$. The
major part of $\uv^{n}$ is useless.

In this manuscript, we define a substructured version of RAS,
that is we define an iterative scheme based on RAS which acts
only over unknowns that are located on the portions of a subdomain
boundary that lie in the interior of another subdomain. We study
in detail the effects that such a substructuring procedure has on
RAS when the latter is applied either as an iterative solver or
as a preconditioner to solve linear and nonlinear boundary value
problems. Does the substructured iterative version converge faster
than the volume one? Is the convergence of GMRES affected by
substructuring? What about nonlinear problems when instead of
preconditioned GMRES we rely on preconditioned Newton?  We prove that substructuring does not influence the convergence of the
iterative methods both in the linear and nonlinear case, by showing that at each iteration, the restriction on the interfaces
of the volume iterates coincides with the iterates of the
substructured iterative method. This equivalence of iterates does not
hold anymore when considering preconditioned GMRES. Specifically, our
study shows that GMRES should always be applied to the substructured
system, since it is computationally less expensive, requiring to perform orthogonalization on a much smaller space,
and thus needs also less memory. In contrast to the linear case,
we prove that the nonlinear preconditioners RASPEN and
SRASPEN (Substructured RASPEN) for Newton produce the same
iterates once these are restricted to the interfaces. However, SRASPEN has again more favorable properties when assembling and
solving the Jacobian matrices at each Newton iteration. Finally,
we also extend the work in
\cite{ciaramella-vanzan,ciaramella-vanzan2,ciaramella-vanzan-spectral}
defining substructured two-level methods to the nonlinear case, where both smoother and coarse correction are defined directly on the
interfaces between subdomains.

This paper is organized as follows: we introduce in Section \ref{sec:domain} the mathematical setting with the domain, subdomains and operators defined on them.  In Section
\ref{Sec:linearcase}, devoted to the linear case, we study the
effects of substructuring on RAS and on GMRES applied to the
preconditioned system. In Section \ref{Sec:nonlinear}, we
extend our analysis to nonlinear boundary value problems. Section
\ref{Sec:two-level} contains two-level substructured methods for the
nonlinear problems. Finally Section \ref{sec:Num_section} provides
extensive numerical tests to corroborate the framework proposed.

\section{Notation}\label{sec:domain}

Let us decompose the domain $\Omega$ into $N$ nonoverlapping subdomains $\Omega_j$, that is $\Omega = \bigcup_{j \in \mathcal{J}} \Omega_j$ with $\mathcal{J}:=\{1,2,\dots,N\}$.
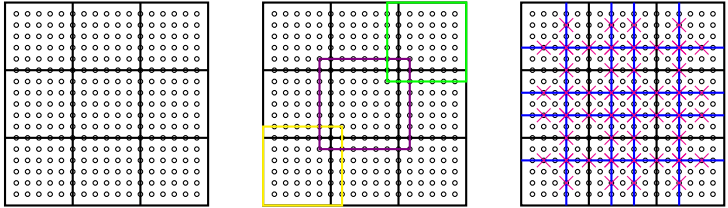
\begin{figure}
\centering
\begin{tikzpicture}[scale=0.15]
%\draw[color=black] (4,11) node {$\wOmega_j$};
\draw [thick,black] (1,1) rectangle (19,19);
\draw [thick,black] (1,7) rectangle (19,7);
\draw [thick,black] (1,13) rectangle (19,13);
\draw [thick,black] (7,1) rectangle (7,19);
\draw [thick,black] (13,1) rectangle (13,19);
%{1,2,3,4,5,6,7,8,9,10,11,12,13,14,15,16,17,18,19} tutti
%{2,3,4,5,6,8,9,10,11,12,14,15,16,17,18} solo alcuni
\foreach \x in {2,3,4,5,6,7,8,9,10,11,12,13,14,15,16,17,18}
    \foreach \y in {2,3,4,5,6,7,8,9,10,11,12,13,14,15,16,17,18}
      {
        \draw (\x,\y) circle (0.2cm) ;
      }
\end{tikzpicture}\qquad
\begin{tikzpicture}[scale=0.15]
%\draw[color=black] (4,11) node {$\wOmega_j$};
\draw [thick,black] (1,1) rectangle (19,19);
\draw [thick,black] (1,7) rectangle (19,7);
\draw [thick,black] (1,13) rectangle (19,13);
\draw [thick,black] (7,1) rectangle (7,19);
\draw [thick,black] (13,1) rectangle (13,19);
%{1,2,3,4,5,6,7,8,9,10,11,12,13,14,15,16,17,18,19} tutti
%{2,3,4,5,6,8,9,10,11,12,14,15,16,17,18} solo alcuni
\foreach \x in {2,3,4,5,6,7,8,9,10,11,12,13,14,15,16,17,18}
    \foreach \y in {2,3,4,5,6,7,8,9,10,11,12,13,14,15,16,17,18}
      {
        \draw (\x,\y) circle (0.2cm) ;
      }
\draw [thick,yellow] (1,1) rectangle (8,8);  
\draw [thick,violet] (6,6) rectangle (14,14); 
\draw [thick,green] (12,12) rectangle (19,19);      
\end{tikzpicture}\qquad 
\begin{tikzpicture}[scale=0.15]
%\draw[color=black] (4,11) node {$\wOmega_j$};
\draw [thick,black] (1,1) rectangle (19,19);
\draw [thick,black] (1,7) rectangle (19,7);
\draw [thick,black] (1,13) rectangle (19,13);
\draw [thick,black] (7,1) rectangle (7,19);
\draw [thick,black] (13,1) rectangle (13,19);
\foreach \x in {2,3,4,5,6,7,8,9,10,11,12,13,14,15,16,17,18}
    \foreach \y in {2,3,4,5,6,7,8,9,10,11,12,13,14,15,16,17,18}
      {
        \draw (\x,\y) circle (0.2cm) ;
      }
\draw [thick,blue] (1,5) -- (19,5);     
\draw [thick,blue] (1,9) -- (19,9);
\draw [thick,blue] (1,11) -- (19,11);  
\draw [thick,blue] (1,15) -- (19,15);   
\draw [thick,blue] (5,1) -- (5,19);     
\draw [thick,blue] (9,1) -- (9,19);
\draw [thick,blue] (11,1) -- (11,19); 
\draw [thick,blue] (15,1) -- (15,19); 

 \foreach \x in {3,5,7,9,11,13,15,17}
    \foreach \y in {5,9,11,15}
      {
     \draw (\x,\y) node[cross,magenta] {};
%        \node[regular polygon, regular polygon sides=3,inner sep=1.5pt,color=green] at (\x,\y) {};
      }
       \foreach \y in {3,5,7,9,11,13,15,17}
    \foreach \x in {5,9,11,15}
      {
     \draw (\x,\y) node[cross,magenta] {};
%        \node[regular polygon, regular polygon sides=3,inner sep=1.5pt,color=green] at (\x,\y) {};
      }

\end{tikzpicture}
\caption{The domain $\Omega$ is divided into nine nonoverlapping subdomains (left). The center panel shows how the diagonal nonoverlapping subdomains are enlarged to form overlapping subdomains. On the right, we denote the unknowns represented in $\overline{V}$ (blue line) and the unknowns of a coarse space of $\overline{V}$ (red crosses).}
\label{fig:subdomain}
\end{figure}
The nonoverlapping subdomains $\Omega_j$ are then enlarged to obtain
subdomains $\Omega^\prime_j$ which form an overlapping decomposition
of $\Omega$.  For each subdomain $\Omega^\prime_j$, we define $V_j$ as
the restriction of $V$ to $\Omega^\prime_j$, that is $V_j$ collects
the degrees of freedom on $\Omega^\prime_j$. Further, we introduce the
classical restriction and prolongation operators $R_j:V\rightarrow
V_j$, $P_j:V_j\rightarrow V$, and the restricted prolongation
operators $\widetilde{P}_j:V_j\rightarrow V$. We assume that these
operators satisfy
\begin{equation}\label{eq:assumption}
R_jP_j=I_{V_j},\quad \mbox{and} \quad\sum_{j\in \mathcal{J}} \widetilde{P}_jR_j = I,
\end{equation}
where $I_{V_j}$ is the identity on $V_j$ and $I$ is the identity on $V$. 

We now define the substructured skeleton. In the following, we use the
notation introduced in \cite{CHS2}.  For any $j \in \mathcal{J}$, we
define the set of neighboring indices $N_j :=\{ \ell \in \mathcal{J}
\, : \, \Omega^\prime_j \cap \partial \Omega^\prime_\ell \neq
\emptyset \}$.  Given a $j \in \mathcal{J}$, we introduce the
substructure of $\Omega^\prime_j$ defined as $S_j := \bigcup_{\ell \in
  N_j} \bigl(\partial \Omega^\prime_\ell \cap \Omega^\prime_j\bigr)$,
that is the union of all the portions of $\partial \Omega^\prime_\ell$
with $\ell \in N_j$.  The substructure of the whole domain $\Omega$ is
defined as $S:=\bigcup_{j \in \mathcal{I}}\overline{S_j}$. A graphical
representation of $S$ is given in Figure \ref{fig:subdomain} for a
decomposition of a square into nine subdomains.  We now introduce the
space $\overline{V}$ as the trace space of $V$ onto the substructure
$S$, i.e. $\overline{V}$ collects the $\overline{N}$ degrees of
freedom which lie on $S$.  Associated to $\overline{V}$, we consider
the restriction operator $\overline{R}:V\rightarrow \overline{V}$ and
a prolongation operator $\overline{P}:\overline{V}\rightarrow V$.  The
restriction operator $\overline{R}$ takes an element $v\in V$ and
restricts it to the skeleton $S$. The prolongation operator
$\overline{P}$ extends an element $v\in \overline{V}$ to the global
space $V$. In our numerical experiments, $\overline{P}$ extends
an element $v_S\in \overline{V}$ by zero in $\Omega\setminus S$.
However, we can consider several different prolongation operators.
How this extension is done is not crucial as we will use
$\overline{P}$ inside a domain decomposition algorithm, and thus only
the values on the skeleton $S$ will play a role. Hence, as of now, we
will need only one assumption on the restriction and prolongation
operator, namely
\begin{equation}\label{eq:assumption1}
\overline{R}\overline{P}=\overline{I},
\end{equation}
where $\overline{I}$ is the identity over $\overline{V}$.

\section{The linear case}\label{Sec:linearcase}
In this section, we focus on the linear problem $A\vect{u}=\vect{f}$. After defining a substructured variant of RAS called SRAS, we prove the equivalence between RAS and SRAS. Then, we study in detail how GMRES performs if applied to the volume preconditioned system or the substructured system.
\subsection{Linear iterative methods}

To introduce our analysis, we recall the classical definition of
RAS to solve the linear system \eqref{eq:linear}. RAS starts
from an approximation $\vect{u}^0$ and computes for $n=1,2,\dots$,
\begin{equation}\label{eq:RAS_linear}
\vect{u}^n=\vect{u}^{n-1}+\sum_{j\in \mathcal{J}} \widetilde{P}_j A^{-1}_j R_j\left(\vect{f}-A\vect{u}^{n-1}\right),
\end{equation} 
where $A_j:=R_jAP_j$, that is, we use exact local solvers.
Let us now rewrite the iteration \eqref{eq:RAS_linear} in an equivalent form using the hypothesis in \eqref{eq:assumption} and the definition of $A_j$,
%\marginpar{\MG{the operator $G^{RAS}$ depends on $f$, which becomes later important. Use $G^{RAS}(u^{n-1},f)$?}}
%\marginpar{\PK{I did not change since it makes a mess inSRAS}}
\begin{eqnarray}
\vect{u}^n &=& \sum_{j\in \mathcal{J}} \widetilde{P}_j R_j \vect{u}^{n-1}+\sum_{j\in \mathcal{J}} \widetilde{P}_j A^{-1}_j R_j\left(\vect{f}-A\vect{u}^{n-1}\right) \nonumber\\
&=&\sum_{j\in \mathcal{J}} \widetilde{P}_j A^{-1}_j \left( A_j R_j\vect{u}^{n-1} +R_j\left(\vect{f}-A\vect{u}^{n-1}\right)\right) \nonumber \\
&=&\sum_{j\in \mathcal{J}} \widetilde{P}_jA^{-1}_jR_j \left(\vect{f}-A\left(I-P_jR_j\right)\vect{u}^{n-1}\right) \nonumber\\
&=&:G^{\RAS}(\vect{u}^{n-1}) \label{eq:RAS2}.
\end{eqnarray}
We emphasize that $\left(P_jR_j-I\right)\vect{u}^{n-1}$ contains non-zero
elements only outside subdomain $\Omega^\prime_j$, and in particular
the terms $A\left(P_jR_j-I\right)\vect{u}^{n-1}$ represent precisely the
boundary conditions for $\Omega^\prime_j$ given the old approximation
$\vect{u}^{n-1}$. This observation suggests that RAS, like most domain
decomposition methods, can be written in substructured
form. Indeed, despite iteration \eqref{eq:RAS2} being written in
volume form, involving the entire vector $\vect{u}^{n-1}$, only
very few elements of $\vect{u}^{n-1}$ are needed to compute the new
approximation $\vect{u}^{n}$. For further details about a substructured
formulation of the parallel Schwarz method at the continuous level, we
refer to \cite{gander2012methodes} for the two subdomain case, and
\cite{ciaramella-vanzan,ciaramella-vanzan2,ciaramella-vanzan-spectral}
for a general decomposition into several subdomains with cross points.

In Section \ref{sec:domain}, we introduced the substructured space
$\overline{V}$ geometrically, but we can also provide an algebraic
characterization using the RAS operators $R_j$ and $P_j$.  We
consider \[\mathcal{K}:=\left\{ k\in \left\{1,\dots,N_v\right\}:
\exists j \in \left\{1,\dots,N\right\} \text{ such that }
R_jA(\vect{e}_k-P_jR_j\vect{e}_k)\neq 0\right\},\] that is, $\mathcal{K}$ is the set
of indices such that the canonical vectors $\vect{e}_k$ represent a Dirichlet
boundary condition at least for a subdomain, and its complement
$\mathcal{K}^c:=\left\{1,\dots,N_v\right\}\setminus \mathcal{K}$. The
cardinality of $\mathcal{K}$ is $|\mathcal{K}|=:\overline{N}$. We can
thus introduce \[\widehat{V}:=\left\{\vect{v}\in \mathbb{R}^{N_v}: \text{ if
}j\notin \mathcal{K} \text{ then }
v_j=0\right\}=\text{span}\left\{\vect{e}_k\right\}_{k\in \mathcal{K}}\subset
\mathbb{R}^{N_v}.\] Finally $\overline{R}$ is the Boolean restriction
operator, mapping a vector of $\mathbb{R}^{N_v}$ onto a vector of
$\mathbb{R}^{\overline{N}}$, keeping only the indices in
$\mathcal{K}$. Hence, $\overline{V}:=\text{Im}\overline{R}(\cong
\mathbb{R}^{\overline{N}})$ and $\overline{P}=\overline{R}^\top$.

To define SRAS, we need one more assumption on the restriction
and prolongation operators, namely
\begin{equation}\label{eq:assumption2}
\overline{R}M^{-1}A=\overline{R}M^{-1}A \overline{PR},
\end{equation}
where $M^{-1}$ is the preconditioner for RAS, formally defined as
\begin{equation}\label{def:M}
M^{-1}:=\sum_{j\in \mathcal{J}} \widetilde{P}_jA^{-1}_jR_j.
\end{equation}
Heuristically, this assumption means that the operator $\overline{P}\overline{R}$ preserves all the information needed by $G^\RAS$ (defined in \eqref{eq:RAS2}) to compute correctly the values of the new iterate on the skeleton $S$. Indeed a direct calculation shows that \eqref{eq:assumption2} is equivalent to the condition
\[\overline{R}G^\RAS(\vect{u})=\overline{R}G^\RAS(\overline{P}\overline{R}\vect{u}).\]

Given a substructured approximation $\vect{v}^0\in \overline{V}$, for $n=1,2,\dots$, we define SRAS  as
\begin{equation}\label{eq:SRAS}
\vect{v}^n=G^\SRAS(\vect{v}^{n-1}),\quad \text{where}\quad  G^\SRAS(\vect{v}):=\overline{R}G^\RAS(\overline{P}\vect{v}).
\end{equation}
RAS and SRAS are tightly linked, but when are they
equivalent? Clearly, we must impose some conditions on $\overline{P}$
and $\overline{R}$. The next theorem shows that 
  assumption \eqref{eq:assumption2} is in fact sufficient for equivalence.

\begin{theorem}[Equivalence between RAS and SRAS]\label{Th:Equivalence}
Assume that the operators $\overline{R}$ and $\overline{P}$ satisfy \eqref{eq:assumption2}. Given an initial guess $\vect{u}^0\in V$ and its substructured restriction $\vect{v}^0:=\overline{R} \vect{u}^0\in \overline{V}$, define the sequences $\left\{ \vect{u}^n\right\}$ and $\left\{\vect{v}^n\right\}$ such that 
\[\vect{u}^n=G^\RAS(\vect{u}^{n-1}),\quad \vect{v}^n=G^\SRAS(\vect{v}^{n-1}).\]
Then, $\overline{R}\vect{u}^{n}=\vect{v}^n$ for every iteration $n\geq 1$.
\end{theorem}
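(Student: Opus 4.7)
The plan is to proceed by induction on $n$, exploiting the equivalent reformulation of assumption \eqref{eq:assumption2} mentioned in the excerpt, namely that \eqref{eq:assumption2} is equivalent to
\[
\overline{R}\,G^\RAS(\vect{u}) = \overline{R}\,G^\RAS(\overline{P}\,\overline{R}\,\vect{u}) \quad \text{for all } \vect{u}\in V.
\]
I would first verify this reformulation explicitly, since it is the entire engine of the proof. Using \eqref{eq:assumption} together with $A_j = R_j A P_j$, a direct manipulation of \eqref{eq:RAS2} yields the preconditioned form $G^\RAS(\vect{u}) = \vect{u} + M^{-1}(\vect{f} - A\vect{u})$. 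Applying $\overline{R}$ and invoking $\overline{R}\,\overline{P} = \overline{I}$ from \eqref{eq:assumption1} shows that the identity $\overline{R}G^\RAS(\vect{u}) = \overline{R}G^\RAS(\overline{P}\overline{R}\vect{u})$ reduces, after cancellation of the affine term $\overline{R}M^{-1}\vect{f}$ and use of $\overline{R}\overline{P}\overline{R}\vect{u} = \overline{R}\vect{u}$, precisely to $\overline{R}M^{-1}A\vect{u} = \overline{R}M^{-1}A\,\overline{P}\overline{R}\vect{u}$, which is \eqref{eq:assumption2}.

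With this reformulation at hand, the base case $n=1$ is immediate: by the definition of $G^\SRAS$ in \eqref{eq:SRAS} and the hypothesis $\vect{v}^0 = \overline{R}\vect{u}^0$,
\[
\vect{v}^1 = G^\SRAS(\vect{v}^0) = \overline{R}\,G^\RAS(\overline{P}\,\overline{R}\,\vect{u}^0) = \overline{R}\,G^\RAS(\vect{u}^0) = \overline{R}\,\vect{u}^1,
\]
where the third equality uses the reformulation of \eqref{eq:assumption2}. The inductive step is analogous: assuming $\overline{R}\vect{u}^n = \vect{v}^n$, one computes
\[
\vect{v}^{n+1} = G^\SRAS(\vect{v}^n) = \overline{R}\,G^\RAS(\overline{P}\,\vect{v}^n) = \overline{R}\,G^\RAS(\overline{P}\,\overline{R}\,\vect{u}^n) = \overline{R}\,G^\RAS(\vect{u}^n) = \overline{R}\,\vect{u}^{n+1},
\]
again invoking the reformulation at the penultimate step.

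The proof is therefore essentially mechanical once the correct algebraic restatement of \eqref{eq:assumption2} is isolated. The only real content is that preliminary verification, which is a short linear computation relying solely on \eqref{eq:assumption}, \eqref{eq:assumption1}, and the definition \eqref{def:M} of $M^{-1}$. The remaining induction carries no genuine obstacle, and in fact clarifies why \eqref{eq:assumption2} was the right assumption to impose: it is exactly the minimal condition under which the restriction $\overline{R}$ commutes with the RAS fixed-point map on orbits starting from $\overline{P}\overline{R}$-equivalent initial data.
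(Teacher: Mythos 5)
Your proof is correct and follows essentially the same route as the paper: both rest on the reformulation of \eqref{eq:assumption2} as $\overline{R}G^\RAS(\vect{u})=\overline{R}G^\RAS(\overline{P}\overline{R}\vect{u})$, establish the case $n=1$ by the same direct calculation, and conclude by induction. The only difference is that you spell out the verification of that reformulation and the inductive step, both of which the paper leaves implicit.
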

\begin{proof}
We prove this statement for $n=1$ by a direct calculation. Taking
the restriction of $\vect{u}^1$ we have
\[\overline{R}\vect{u}^1=\overline{R}G^\RAS(\vect{u}^0)=\overline{R}G^\RAS(\overline{P}\overline{R}\vect{u}^0)=\overline{R}G^\RAS(\overline{P}\vect{v}^0)=G^\SRAS(\vect{v}^0)=\vect{v}^1,\]
where we used assumption \eqref{eq:assumption2}, and the definition of
$\vect{v}^0$ and $G^{\text{SRAS}}$.  For a general $n$, the proof is obtained
by induction.
\end{proof}

\subsection{Linear preconditioners for GMRES}\label{Sec:GMRES}

It is well known that any stationary iterative method should be
used in practice as a preconditioner for a Krylov method,
  since the Krylov method finds in general a much better residual
  polynomial with certain optimality properties, compared to the
  residual polynomial of the stationary iteration, see
  e.g. \cite[Section 4.1]{CiaramellaGander2021book}.  The preconditioner
associated to RAS is $M^{-1}$ and is defined in
\eqref{def:M}. The preconditioned volume system then reads
\begin{equation}\label{eq:precond_vol}
M^{-1}A\uv=M^{-1}\fv.\\
\end{equation}
To discover the preconditioner associated with SRAS, we consider
the fixed point limit of \eqref{eq:SRAS},
\begin{equation}\label{eq:fixedpoint}
\begin{aligned}
\vv&=G^\SRAS(\vv)=\overline{R}G^\RAS(\overline{P}\vv)=\overline{R}\left(\overline{P}\vv+\sum_{j\in \mathcal{J}}\widetilde{P}_jA_j^{-1}R_j(\fv-A\overline{P}\vv)\right)\\
&=\vv +\overline{R}\sum_{j\in \mathcal{J}}\widetilde{P}_jA_j^{-1}R_j\fv - \overline{R}\sum_{j\in \mathcal{J}}\widetilde{P}_jA_j^{-1}R_j A\overline{P}\vv\\
&=\vv + \overline{R}M^{-1}\fv-\overline{R}M^{-1}A\overline{P}\vv,
\end{aligned}
\end{equation}
where in the second line we used the identity $\overline{R}\overline{P}=I_S$.
We can thus consider the preconditioned substructured system
\begin{equation}\label{eq:precond_sub}
\overline{R}M^{-1}A\overline{P}\vv=\overline{R}M^{-1}\fv.
\end{equation}
It is then natural to ask how a Krylov method like GMRES performs if
applied to \eqref{eq:precond_vol}, compared to \eqref{eq:precond_sub}.

Let us consider an initial guess in volume $\uv^0$, its restriction $\vv^0:=\overline{R}\uv^0$ and the initial residuals $\rv^0:=M^{-1}(\fv-A\uv^0)$, $\overline{\rv}^0:=\overline{R}M^{-1}(\fv-A\overline{P}\vv^0)$.
Then GMRES applied to the preconditioned systems \eqref{eq:precond_vol} and \eqref{eq:precond_sub} looks for solutions in the affine Krylov spaces
\begin{equation}\label{eq:Krylov}
\begin{aligned}
\uv^0+\mathcal{K}_k(M^{-1}A,\rv^0)&:=\uv^0+\text{ span}\left\{\rv^0,M^{-1}A\rv^0,\dots,(M^{-1}A)^{k-1}\rv^0\right\} \\
\vv^0+\mathcal{K}_k(\overline{R}M^{-1}A\overline{P},\overline{\rv}^0)&:=\vv^0+\text{ span}\left\{\overline{\rv}^0,\overline{R}M^{-1}A\overline{P}\overline{\rv}^0,\dots,(\overline{R}M^{-1}A\overline{P})^{k-1}\overline{\rv}^0\right\},
\end{aligned}
\end{equation}
where $k\geq 1$.  The two Krylov spaces are tightly linked, as Theorem
\ref{thm:Krylov} below will show. To prove it, we need the
following Lemma.

\begin{lemma}\label{Lemma:powerk}
If the restriction and prolongation operators $\overline{R}$ and $\overline{P}$ satisfy \eqref{eq:assumption2} then for $k\geq 1$,
\begin{equation}\label{eq:assumption2_generalk}
\overline{R}\left(M^{-1}A\right)^{k}=\left(\overline{R}M^{-1}A\overline{P}\right)^k\overline{R}.
\end{equation}
\end{lemma}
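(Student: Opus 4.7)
The plan is to prove the identity by induction on $k$, using assumption \eqref{eq:assumption2} as both the base case and the key algebraic step in the inductive step.

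For the base case $k=1$, the desired identity $\overline{R}M^{-1}A = (\overline{R}M^{-1}A\overline{P})\overline{R}$ is exactly the hypothesis \eqref{eq:assumption2}, so nothing needs to be done beyond citing it.

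For the inductive step, I assume $\overline{R}(M^{-1}A)^{k} = (\overline{R}M^{-1}A\overline{P})^{k}\overline{R}$ and seek to establish the same identity for $k+1$. The natural move is to peel off one factor of $M^{-1}A$ from the left of $\overline{R}(M^{-1}A)^{k+1}$, apply \eqref{eq:assumption2} to rewrite $\overline{R}M^{-1}A$ as $\overline{R}M^{-1}A\overline{P}\overline{R}$, and then use the inductive hypothesis on the remaining factor $\overline{R}(M^{-1}A)^{k}$. Concretely,
\[
\overline{R}(M^{-1}A)^{k+1} = \bigl(\overline{R}M^{-1}A\bigr)(M^{-1}A)^{k} = \overline{R}M^{-1}A\overline{P}\bigl(\overline{R}(M^{-1}A)^{k}\bigr) = (\overline{R}M^{-1}A\overline{P})^{k+1}\overline{R}.
\]

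There is no real obstacle here; the lemma is essentially a telescoping observation, and the only subtlety is to be careful about on which side one applies the hypothesis \eqref{eq:assumption2} at each step. One could equivalently prove it by peeling a factor off the right and inserting $\overline{P}\overline{R}$ via \eqref{eq:assumption2} just before the final $M^{-1}A$, but the leftmost-factor version above is the cleanest because it directly reuses the induction hypothesis without any rearrangement.
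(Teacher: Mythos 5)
Your proof is correct and takes essentially the same route as the paper: both arguments proceed by induction on $k$, using assumption \eqref{eq:assumption2} to insert the factor $\overline{P}\,\overline{R}$ after an occurrence of $\overline{R}M^{-1}A$ and then telescoping. The paper merely writes out the $k=2$ case explicitly before invoking induction, whereas you spell out the general inductive step, which is if anything slightly more complete.
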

\begin{proof}
Multiplying the second equation in \eqref{eq:assumption2} from the
right by $M^{-1}A$ we get
\begin{eqnarray*}
\overline{R}M^{-1}AM^{-1}A&=&\overline{R}M^{-1}A\overline{P}\overline{R}M^{-1}A=\overline{R}M^{-1}A\overline{P}\overline{R}M^{-1}A\overline{P}\overline{R},
\end{eqnarray*}
where in the second equality we have used once more \eqref{eq:assumption2}. Using induction, one gets for every $k\geq 1$,
\[
\overline{R}\left(M^{-1}A\right)^{k}=\left(\overline{R}M^{-1}A\overline{P}\right)^k\overline{R},
\]
and this completes the proof.
\end{proof}

\begin{theorem}[Relation between RAS and SRAS Krylov subspaces]\label{thm:Krylov}
Let us consider operators $\overline{R}$ and $\overline{P}$ satisfying \eqref{eq:assumption2}, an initial guess $\uv^0\in V$, its restriction $\vv^0:=\overline{R}\uv^0\in \overline{V}$ and the residuals $\rv^0:=M^{-1}(\fv-A\uv^0)$, $\overline{\rv}^0:=\overline{R}M^{-1}(\fv-A\overline{P}\vv^0)$. Then for every $k\geq 1$, we have
\begin{equation}\label{eq:ResKrylov}
\vv^0+\mathcal{K}_k(\overline{R}M^{-1}A\overline{P},\overline{\rv}^0)=\overline{R}(\uv^0+\mathcal{K}_k(M^{-1}A,\rv^0)).
\end{equation}
\end{theorem}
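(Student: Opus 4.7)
The plan is to reduce the theorem to Lemma \ref{Lemma:powerk} together with a small initial observation about the two residuals. The substructured Krylov space is generated by powers of $\overline{R}M^{-1}A\overline{P}$ acting on $\overline{\rv}^0$, while the restricted volume Krylov space consists of $\overline{R}$ applied to powers of $M^{-1}A$ acting on $\rv^0$. So the two bases should match term by term, and the linearity of $\overline{R}$ will allow me to pull it out of the span.

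First I would verify the compatibility of the initial residuals, namely $\overline{\rv}^0 = \overline{R}\rv^0$. Starting from $\overline{\rv}^0 = \overline{R}M^{-1}\fv - \overline{R}M^{-1}A\overline{P}\vv^0$ and substituting $\vv^0 = \overline{R}\uv^0$, assumption \eqref{eq:assumption2} gives $\overline{R}M^{-1}A\overline{P}\overline{R}\uv^0 = \overline{R}M^{-1}A\uv^0$, so the right-hand side collapses to $\overline{R}M^{-1}(\fv - A\uv^0) = \overline{R}\rv^0$. Similarly, $\vv^0 = \overline{R}\uv^0$ by definition.

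Next I would use Lemma \ref{Lemma:powerk} to identify the basis vectors. For each $j \in \{1,\dots,k-1\}$,
\[
(\overline{R}M^{-1}A\overline{P})^j \overline{\rv}^0 = (\overline{R}M^{-1}A\overline{P})^j \overline{R} \rv^0 = \overline{R}(M^{-1}A)^j \rv^0,
\]
and for $j=0$ the equality $\overline{\rv}^0 = \overline{R}\rv^0$ was just established. Hence the two bases for the Krylov spaces are related by $\overline{R}$ applied termwise.

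Finally, combining these identities and using linearity of $\overline{R}$,
\begin{align*}
\vv^0 + \mathcal{K}_k(\overline{R}M^{-1}A\overline{P},\overline{\rv}^0)
&= \overline{R}\uv^0 + \operatorname{span}\bigl\{\overline{R}\rv^0,\dots,\overline{R}(M^{-1}A)^{k-1}\rv^0\bigr\} \\
&= \overline{R}\bigl(\uv^0 + \operatorname{span}\{\rv^0,\dots,(M^{-1}A)^{k-1}\rv^0\}\bigr) \\
&= \overline{R}\bigl(\uv^0 + \mathcal{K}_k(M^{-1}A,\rv^0)\bigr),
\end{align*}
yielding \eqref{eq:ResKrylov}. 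There is no real obstacle here beyond the initial-residual check; the heavy lifting is done by Lemma \ref{Lemma:powerk}, and the rest is bookkeeping on spans.
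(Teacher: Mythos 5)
Your proposal is correct and follows essentially the same route as the paper: both verify $\overline{\rv}^0=\overline{R}\rv^0$ from \eqref{eq:assumption2} and then invoke Lemma \ref{Lemma:powerk} to transport the generators; the paper just writes out the two inclusions with explicit coefficients where you pull $\overline{R}$ through the span by linearity. No gap.
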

\begin{proof}
First, due to \eqref{eq:assumption2} we have \[\overline{R}\rv^0=\overline{R}M^{-1}(\fv-A\uv^0)=\overline{R}M^{-1}(\fv-A\overline{P}\overline{R}\uv^0)=\overline{R}M^{-1}(\fv-A\overline{P}\vv^0)=\overline{\rv}^0.\]
Let us now show the first inclusion. If $\vv\in  \overline{R}\left( \uv^0 + \mathcal{K}_k(M^{-1}A,\rv^0)\right)$, then 
$\vv= \overline{R} \uv^0+ \overline{R}\sum_{j=0}^{k-1} \gamma_j \left(M^{-1}A\right)^j\rv^0,$
for some coefficients $\gamma_j$.
Using Lemma \ref{Lemma:powerk}, we can rewrite $\vv$ as
\begin{eqnarray*}
\vv &=& \vv^0+ \sum_{j=0}^{k-1} \gamma_j \overline{R}\left(M^{-1}A\right)^j \rv^0 =\vv^0+ \sum_{j=0}^{k-1} \gamma_j \left(\overline{R}M^{-1}A\overline{P}\right)^j\overline{R} \rv^0 \\
&=& \vv^0+ \sum_{j=0}^{k-1} \gamma_j \left(\overline{R}M^{-1}A\overline{P}\right)^j\overline{\rv}^0
\in  \vv^0+\mathcal{K}_k(\overline{R}M^{-1}A\overline{P},\overline{\rv}^0),
\end{eqnarray*}
 and thus $ \overline{R}\left( \uv^0 + \mathcal{K}_k(M^{-1}A,\rv^0)\right) \subset \vv^0+\mathcal{K}_k(\overline{R}M^{-1}A\overline{P},\overline{\rv}^0)$.
Similarly if $\wv\in \vv^0+\mathcal{K}_k(\overline{R}M^{-1}A\overline{P},\overline{\rv}^0)$ then
\begin{eqnarray*}
\wv &=&\vv^0+\sum_{j=0}^{k-1} \gamma_j \left(\overline{R}M^{-1}A\overline{P}\right)^k\overline{\rv}^0=\overline{R} \uv^0+\sum_{j=0}^{k-1} \gamma_j \left(\overline{R}M^{-1}A\overline{P}\right)^j\overline{R}\rv^0\\
&=&\overline{R} \uv^0+\sum_{j=0}^{k-1} \gamma_j \overline{R}\left(M^{-1}A\right)^{j}\rv^0,
\end{eqnarray*}
thus $\wv\in \overline{R}\left( \uv^0 + \mathcal{K}_k(M^{-1}A,\rv^0)\right)$ and we achieve the desired relation \eqref{eq:ResKrylov}.
\end{proof} 

Theorem \ref{thm:Krylov} shows that the restriction to the
substructure of the affine volume Krylov space of RAS coincides
with the affine substructured Krylov space of SRAS. One could
then wonder if the restrictions of the iterates of GMRES applied to
the preconditioned volume system \eqref{eq:precond_vol} coincide with
the iterates of GMRES applied to the preconditioned substructured
system \eqref{eq:precond_sub}.  However, this does not turn out
to be true. Nevertheless, we can further link the action of GMRES on
these two preconditioned systems.

It is well known, (see e.g \cite[Section 6.5.1]{saad2003iterative}), that GMRES applied to \eqref{eq:precond_vol} and \eqref{eq:precond_sub} generates a sequence of iterates $\left\{ \uv^k\right\}_k$ and $\left\{\vv^k\right\}_k$ such that
\begin{equation}\label{eq:lsq_vol}
\uv^k=\text{argmin}_{\widetilde{\uv}^k\in \uv^0+\mathcal{K}_k(M^{-1}A,\rv^0)} \|M^{-1}\fv-M^{-1}A\widetilde{\uv}^k\|_2,
\end{equation}
and
\begin{equation}\label{eq:lsq_sub}
\vv^k=\text{argmin}_{\widetilde{\vv}^k\in \vv^0+\mathcal{K}_k(\overline{R}M^{-1}A\overline{P},\overline{\rv}^0)} \|\overline{R}M^{-1}\fv-\overline{R}M^{-1}A\overline{P}\widetilde{\vv}^k\|_2.
\end{equation}
The iterates $\uv^k$ and $\vv^k$ can be characterized using orthogonal and
Hessenberg matrices obtained with the Arnoldi iteration. In
particular, the k-th iteration of Arnoldi provides orthogonal matrices
$Q_k,Q_{k+1}$ and a Hessenberg matrix $H_k$ such that
$M^{-1}AQ_k=Q_{k+1}H_k$, and the columns of $Q_j$ form an orthonormal
basis for the Krylov subspace $\mathcal{K}_k(M^{-1}A,\rv^0)$.  Using
%\marginpar{\MG{It would be better not to use the scalar $\alpha$ for a vector. Use $\mathbf{a}$? Similar for later greek letters representing vectors.}}
%\marginpar{\PK{$\mathbf{a}, \mathbf{y}, \mathbf{t}$ for vector $\alpha,\gamma, \mu$ respectively}} 
these matrices, one writes $\uv^k$ as $\uv^k=\uv^0+Q_k\av$, where $\av
\in \mathbb{R}^k$ is the solution of the least squares problem
\label{eq:lsq_vol2}
\begin{equation}
\av=\text{argmin}_{\widetilde{\av}\in \mathbb{R}^k}\|Q_k(\|\rv^0\|_2 \ev_1 -H_k\widetilde{\av})\|_2 =\text{argmin}_{\widetilde{\av}\in \mathbb{R}^k}\|\|\rv^0\|_2 \ev_1 -H_k\widetilde{\av}\|_2,
\end{equation}
and $\ev_1$ is the canonical vector of $\mathbb{R}^{k+1}$.
Similarly, one characterizes the vector $\vv^k$ as $\vv^k=\vv^0+\overline{Q}_k\yv$ such that 
\begin{equation}\label{eq:lsq_sub2}
\yv=\text{argmin}_{\widetilde{\yv}\in \mathbb{R}^k}\|\|\overline{\rv}^0\|_2 \ev_1 -\overline{H}_k\widetilde{\yv}\|_2,
\end{equation}
where $\overline{Q}_k, \overline{H}_k$ are the orthogonal and Hessenberg matrices obtained through the Arnoldi method applied to the matrix $\overline{R}M^{-1}A\overline{P}$.

The next Theorem provides a link between the volume least square problem \eqref{eq:lsq_vol} and the substructured one \eqref{eq:lsq_sub}.
\begin{theorem}
Under the hypothesis of Theorem \ref{thm:Krylov}, the k-th iterate of GMRES applied to \eqref{eq:precond_sub} is equal to $\vv^k=\vv^0+\overline{Q}_k\yv=\vv^0+\overline{R}Q_k\tv$, where $\yv$ satisfies \eqref{eq:lsq_sub2} while
\begin{equation}\label{eq:mu}
\tv:=\text{argmin}_{\widetilde{\tv}\in \mathbb{R}^k}\|\overline{R}Q_{k+1}(\|\rv^0\|_2\ev_1-H_k\widetilde{\tv})\|_2.
\end{equation}
\end{theorem}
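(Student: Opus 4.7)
The plan is to re-parametrize the substructured affine Krylov space via the volume Arnoldi basis $Q_k$, and then to simplify the residual in the substructured least squares problem \eqref{eq:lsq_sub} using the volume Arnoldi relation $M^{-1}AQ_k = Q_{k+1}H_k$ together with assumption \eqref{eq:assumption2}. The first equality $\vv^k = \vv^0 + \overline{Q}_k\yv$ is simply the Arnoldi-based characterization of GMRES applied to \eqref{eq:precond_sub}, so I would just quote it from \eqref{eq:lsq_sub2}; the new content lies in the second representation.

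To establish $\vv^k = \vv^0 + \overline{R}Q_k\tv$ with $\tv$ as in \eqref{eq:mu}, I would first invoke Theorem \ref{thm:Krylov} to write
\[
\vv^0+\mathcal{K}_k(\overline{R}M^{-1}A\overline{P},\overline{\rv}^0)=\overline{R}\bigl(\uv^0+\mathcal{K}_k(M^{-1}A,\rv^0)\bigr),
\]
so that every candidate $\widetilde{\vv}^k$ in the substructured affine Krylov space can be written as $\overline{R}(\uv^0 + Q_k\widetilde{\tv}) = \vv^0 + \overline{R}Q_k\widetilde{\tv}$ for some $\widetilde{\tv}\in\mathbb{R}^k$. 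The minimization \eqref{eq:lsq_sub} can thus be recast as a minimization over $\widetilde{\tv}\in\mathbb{R}^k$. Next, using $\vv^0 = \overline{R}\uv^0$, the identity $\overline{\rv}^0 = \overline{R}\rv^0$ established in the proof of Theorem \ref{thm:Krylov}, the form $\overline{R}M^{-1}A\overline{P}\overline{R} = \overline{R}M^{-1}A$ of assumption \eqref{eq:assumption2}, the Arnoldi relation $M^{-1}AQ_k = Q_{k+1}H_k$, and the fact that the first column of $Q_{k+1}$ is $\rv^0/\|\rv^0\|_2$, the substructured residual reduces to
\begin{align*}
\overline{R}M^{-1}\fv - \overline{R}M^{-1}A\overline{P}\bigl(\vv^0 + \overline{R}Q_k\widetilde{\tv}\bigr)
&= \overline{\rv}^0 - \overline{R}M^{-1}AQ_k\widetilde{\tv} \\
&= \overline{R}Q_{k+1}\bigl(\|\rv^0\|_2\ev_1 - H_k\widetilde{\tv}\bigr),
\end{align*}
which is precisely the vector whose norm appears in \eqref{eq:mu}. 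Hence any minimizer $\tv$ of \eqref{eq:mu} yields a substructured GMRES iterate $\vv^k = \vv^0 + \overline{R}Q_k\tv$.

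The main subtlety I anticipate is that the linear map $\widetilde{\tv}\mapsto \overline{R}Q_k\widetilde{\tv}$ is generally not injective: although the columns of $Q_k$ are orthonormal in $V$, their restrictions to the skeleton need not be linearly independent in $\overline{V}$. Consequently the minimizer $\tv$ of \eqref{eq:mu} may fail to be unique, and one cannot identify $\tv$ with $\yv$ directly. This, however, does not affect the statement: the GMRES iterate $\vv^k$ is uniquely determined by \eqref{eq:lsq_sub}, and every minimizer $\tv$ of \eqref{eq:mu} produces the same image $\overline{R}Q_k\tv = \overline{Q}_k\yv$ in $\overline{V}$, namely the unique element of the substructured affine Krylov space attaining the minimal residual.
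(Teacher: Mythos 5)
Your proposal is correct and follows essentially the same route as the paper: quote the Arnoldi characterization for the first equality, use Theorem \ref{thm:Krylov} to re-parametrize the substructured affine Krylov space via $\overline{R}Q_k$, and then simplify the residual with assumption \eqref{eq:assumption2}, $\overline{\rv}^0=\overline{R}\rv^0$, and the Arnoldi relation $M^{-1}AQ_k=Q_{k+1}H_k$ to arrive at \eqref{eq:mu}. Your closing remark is in fact more careful than the paper, which asserts that the columns of $\overline{R}Q_k$ form an \emph{orthonormal} basis of the substructured Krylov space -- in general they only span it and need not be independent, so your observation that $\tv$ may be non-unique while $\overline{R}Q_k\tv$ is still the unique GMRES iterate is the right way to close that gap.
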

\begin{proof}
It is clear that $\vv^k=\vv^0+\overline{Q}_k\yv=\vv^0+\overline{R}Q_k\tv$ as the first equality follows
from standard GMRES literature (see e.g \cite[Section 6.5.1]{saad2003iterative}). The second equality follows from  Theorem \eqref{thm:Krylov} as we have shown that $\vv^0+\mathcal{K}_k(\overline{R}M^{-1}A\overline{P},\overline{\rv}^0)=\overline{R}(\uv^0+\mathcal{K}_k(M^{-1}A,\rv^0))$. Thus the columns of $\overline{R}Q_k$ form an orthonormal basis of $\mathcal{K}_k(\overline{R}M^{-1}A\overline{P},\overline{\rv}^0)$ and hence, $\vv^k$ can be expressed as a linear combination of the columns of $\overline{R}Q_k$ with coefficients in the vector $\tv\in \mathbb{R}^k$ plus $\vv^0$. We are then left to show \eqref{eq:mu}. We have
\begin{eqnarray*}
\min_{\widetilde{\vv}^k \in \vv^0+\mathcal{K}_k(\overline{R}M^{-1}A\overline{P},\overline{\rv}^0)}& & \|\ \overline{R}M^{-1}\fv-\overline{R}M^{-1}A\overline{P} \widetilde{\vv}^k\|_2\\
&=&\min_{\widetilde{\tv}\in\mathbb{R}^k} \|\ \overline{R}M^{-1}\fv-\overline{R}M^{-1}A\overline{P}(\vv^0+\overline{Q}_k\widetilde{\tv})\|_2\\
&=&\min_{\widetilde{\yv}\in\mathbb{R}^k} \|\ \overline{R}M^{-1}\fv-\overline{R}M^{-1}A\overline{P}\overline{R}\uv^0 -\overline{R}M^{-1}A\overline{P}\overline{Q}_k\widetilde{\yv})\|_2.
\end{eqnarray*}
Using the relation Im$(\overline{R}Q_k)=$Im$(\overline{Q}_k)$, Lemma \eqref{Lemma:powerk}, the Arnoldi relation $M^{-1}AQ_k=Q_{k+1}H_k$ and that $\rv^0$ coincides with the first column of $Q_k$ except for a normalization constant, we conclude
\begin{equation}
\begin{aligned}
&\min_{\widetilde{\tv}\in\mathbb{R}^k} \|\ \overline{R}M^{-1}\fv-\overline{R}M^{-1}A\overline{P}\overline{R}\uv^0 -\overline{R}M^{-1}A\overline{P}\overline{R}Q_k\widetilde{\tv})\|_2\\
&=\min_{\widetilde{\tv}\in\mathbb{R}^k} \|\ \overline{R}\rv^0 -\overline{R}M^{-1}AQ_k\widetilde{\tv})\|_2\\
&=\min_{\widetilde{\tv}\in\mathbb{R}^k} \|\ \overline{R}Q_{k+1}(\|\rv^0\|_2 \ev_1 -H_k\widetilde{\tv})\|_2,
\end{aligned}
\end{equation}
and this completes the proof.
\end{proof}

Few comments are in order here. First, GMRES applied to \eqref{eq:precond_sub} converges in maximum $\overline{N}$ iterations as the preconditioned matrix $\overline{R}M^{-1}A\overline{P}$ has size $\overline{N}\times\overline{N}$.
Second, Theorem \eqref{thm:Krylov} states that $\overline{R}(\uv^0+\mathcal{K}_{\overline{N}}(M^{-1}A,\rv^0))$  already contains the exact substructured solution, that is the exact substructured solution lies in the restriction of the volume Krylov space after $\overline{N}$ iterations. Theoretically, if one could get the exact substructured solution from $\overline{R}(\uv_0+\mathcal{K}_{\overline{N}}(M^{-1}A,\rv^0))$, then $\overline{N}$ iterations of GMRES applied to \eqref{eq:precond_vol}, plus an harmonic extension of the substructured data into the subdomains, would be sufficient to get the exact volume solution. 

On the other hand, we can say a bit more analyzing the structure of $M^{-1}A$. Using the splitting $A=M-N$, we have $M^{-1}A=I-M^{-1}N$. A direct calculation states $\mathcal{K}_k(M^{-1}A,\rv^0)=\mathcal{K}_k(M^{-1}N,\rv^0)$ by 
using the relation
  \[(M^{-1}A)^k=(I-M^{-1}N)^k=\sum_{j=0}^k {k\choose j} (-1)^j (M^{-1}N)^j\quad \forall k\geq 1,\]
that is the Krylov space generated by $M^{-1}A$ is equal to the Krylov space generated by the RAS iteration matrix for error equation. We denote this linear operator with $G^\RAS_0$ which is defined as in \eqref{eq:RAS2} with $\fv=0$. We now consider the orthogonal complement $\widehat{V}^\perp:=(\text{span}\left\{\ev_k\right\}_{k\in \mathcal{K}})^\perp=\text{span}\left\{\ev_i\right\}_{i\in \mathcal{K}^c}$, and $\text{dim}(\widehat{V}^\perp)=N_v-\overline{N}$.
%\marginpar{\MG{English symbol for the kernel is ker. Also the writing of the rank-nullity theorem is mixing two notations: either rank(G)+nullity(G)=Nv, or the much more common dim(im(G))+dim(ker(G))=Nv in English}-\PK{DONE}}
Since for every $\vv\in \widehat{V}^\perp$, it holds that $R_jA(I-P_jR_j)\vv=0$, we can conclude that $\widehat{V}^\perp \subset \text{ker}\left(G_0^\RAS\right)$.
%\marginpar{
 % \T{Surprisingly the equality between $\widehat{V}^\perp$ and Kern$G^\RAS_0$ does not necessarily holds as there can be some elements in $\widehat{V}$ which are also in the kernel of $G^\RAS$, depending on the decomposition and on the operators $\widetilde{P}_j$}.\MG{Discuss and add in the text ?}} 
  Using the rank-nullity theorem, we obtain 
%\[\text{rank} G_0^\RAS + \text{dim}(\text{Kern}G_0^\RAS)=N_v\implies \text{rank} G_0^\RAS \leq \overline{N},\] 
\[\text{dim\big(Im} (G_0^\RAS)\big) + \text{dim}\big(\text{Ker}\left(G_0^\RAS\right)\big)=N_v\implies \text{dim\big(Im}\left( G_0^\RAS\right)\big) \leq \overline{N},\] 
hence GMRES applied to the preconditioned volume system encounters a lucky Arnoldi breakdown after at most $\overline{N}+1$ iterations (in exact arithmetic). This rank argument can be used for the substructured preconditioned system as well. Indeed as
$\overline{R}M^{-1}A\overline{P}=\overline{I}-\overline{R}M^{-1}N\overline{P}$,
the substructured Krylov space is generated by the matrix
$\overline{R}M^{-1}N\overline{P}$, whose rank is equal to the rank of
$M^{-1}N$, that is the rank of $G_0^\RAS$.

Heuristically, choosing a zero initial guess, $\rv^0:=M^{-1}\fv$  corresponds to a solution of subdomains problem with the correct right hand side, but with zero Dirichlet boundary conditions along the interfaces of each subdomain. Thus, GMRES applied to \eqref{eq:precond_vol} needs only to find the correct boundary conditions for each subdomain, and this can be achieved in at most $\overline{N}$ iterations as Theorem \ref{thm:Krylov} shows.

Finally, each GMRES iteration on \eqref{eq:lsq_sub} is computationally less expensive than a GMRES iteration on \eqref{eq:lsq_vol} as the orthogonalization of the Arnoldi method is carried out in a much smaller space. From the memory point of view, this implies that 
GMRES needs to store shorter vectors. Thus, a saturation of the memory is less likely, and restarted versions of GMRES may be avoided.

\section{The nonlinear case}\label{Sec:nonlinear}
In this section, we study iterative and preconditioned domain
decomposition methods to solve the nonlinear system
\eqref{eq:nonlinear}.

\subsection{Nonlinear iterative methods}

RAS can be generalized to solve the nonlinear equation
\eqref{eq:nonlinear}. To show this, we introduce the solution
operators $G_j$ which are defined through
\begin{equation}\label{eq:definition_Gi}
R_jF(P_j G_j(\uv)+ (I-P_jR_j)\uv)=0,
\end{equation}
where the operators $R_j$ and $P_j$ are defined in Section \ref{sec:domain}.
Nonlinear RAS for $N$ subdomains then reads
\begin{equation}\label{eq:RASnonlinear}
\uv^n=\sum_{j\in \mathcal{J}} \widetilde{P}_j G_j(\uv^{n-1}).
\end{equation} 
It is possible to show that \eqref{eq:RASnonlinear} reduces to \eqref{eq:RAS2} if $F(\uv)$ is a linear function: assuming that $F(\uv)=A\uv-\fv$, equation \eqref{eq:definition_Gi} becomes
\begin{eqnarray*}
R_j F\left(P_j G_j\left(\uv^{n-1}\right)+ \left(I-P_jR_j\right)\uv^{n-1}\right)&=& R_j\left(A\left(P_jG_j\left(\uv^{n-1}\right)+\left(I-P_jR_j\right)\uv^{n-1}\right)-\fv\right)\\
&=&A_jG_j\left(\uv^{n-1}\right)+R_j\left(A\left(I-P_jR_j\right)\uv^{n-1}-\fv\right)=0,
\end{eqnarray*}
which implies $G_j\left(\uv^{n-1}\right)=A_j^{-1}R_j\left(\fv-A\left(I-P_jR_j\right)\uv^{n-1}\right)$, and thus \eqref{eq:RASnonlinear} reduces to \eqref{eq:RAS2}.

Similarly to the linear case, we define nonlinear SRAS: with
\begin{equation}\label{eq:definition_GSi}
\overline{G}_j(\vv^{n-1}):=\overline{R}\widetilde{P}_jG_j\left(\overline{P}\vv^{n-1}\right),
\end{equation}
 we obtain the nonlinear substructured iteration
\begin{equation}\label{eq:SRASnonlinear}
\vv^n=\overline{R}\sum_{j\in \mathcal{J}} \widetilde{P}_j G_j(\overline{P}\vv^{n-1})=\sum_{j\in \mathcal{J}}\overline{G}_j(\vv^{n-1}),
\end{equation}
which is the nonlinear counterpart of \eqref{eq:SRAS}.
 
The same calculations of Theorem \ref{Th:Equivalence} allow one to obtain an equivalence result between nonlinear RAS and nonlinear SRAS.
\begin{theorem}[Equivalence between nonlinear RAS and SRAS]\label{Th:Equivalence_nonlinear}
Assume that the operators $\overline{R}$ and $\overline{P}$ satisfy $\overline{R}\sum_{j\in \mathcal{J}}\widetilde{P}_j G_j(\uv)=\overline{R}\sum_{j\in \mathcal{J}}\widetilde{P}_j G_j(\overline{P}\overline{R}\uv)$. Let us consider an initial guess $\uv^0\in V$ and its substructured restriction $\vv^0:=\overline{R} \uv^0\in \overline{V}$, and define the sequences $\left\{ \uv^n\right\}$, $\left\{\vv^n\right\}$ such that 
\[\uv^n=\sum_{j\in \mathcal{J}} \widetilde{P}_j G_j(\uv^{n-1}),\quad \vv^n=\sum_{j\in \mathcal{J}}\overline{G}_j(\vv^{n-1}).\]
Then for every $n\geq 1$, $\overline{R}\uv^{n}=\vv^n$.
\end{theorem}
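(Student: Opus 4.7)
The plan is to mirror the proof of Theorem \ref{Th:Equivalence} almost line for line, replacing the linear RAS iteration operator $G^{\RAS}$ by the nonlinear update $\uv \mapsto \sum_{j\in \mathcal{J}} \widetilde{P}_j G_j(\uv)$, and exploiting the assumption on $\overline{R}$ and $\overline{P}$ exactly as \eqref{eq:assumption2} was used in the linear setting. The argument is a direct induction on $n$, and the only real work is to verify the base case carefully; the inductive step is then essentially the same calculation.

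For the base case $n=1$, I would start from $\overline{R}\uv^1 = \overline{R}\sum_{j\in \mathcal{J}} \widetilde{P}_j G_j(\uv^0)$, then apply the hypothesis
\[
\overline{R}\sum_{j\in \mathcal{J}}\widetilde{P}_j G_j(\uv^0)=\overline{R}\sum_{j\in \mathcal{J}}\widetilde{P}_j G_j(\overline{P}\overline{R}\uv^0),
\]
substitute $\vv^0 = \overline{R}\uv^0$, pull the restriction $\overline{R}$ inside the sum, and recognize $\overline{R}\widetilde{P}_j G_j(\overline{P}\vv^0) = \overline{G}_j(\vv^0)$ by the definition \eqref{eq:definition_GSi}. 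Adding these contributions over $j\in\mathcal{J}$ yields precisely $\vv^1$ as given by \eqref{eq:SRASnonlinear}.

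For the inductive step, assuming $\overline{R}\uv^{n-1} = \vv^{n-1}$, the same chain of equalities applies with $\uv^0$ replaced by $\uv^{n-1}$: one uses the hypothesis to replace $\uv^{n-1}$ by $\overline{P}\overline{R}\uv^{n-1}$ inside each $G_j$, then invokes the induction hypothesis to rewrite $\overline{R}\uv^{n-1}$ as $\vv^{n-1}$, and finally regroups using the definition of $\overline{G}_j$ to conclude $\overline{R}\uv^n = \vv^n$.

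There is no genuine obstacle in this argument: the structure of the proof is dictated entirely by the hypothesis, which was engineered as the nonlinear analogue of \eqref{eq:assumption2}. The only subtlety worth flagging is that, unlike the linear case where $G^\RAS$ could be written compactly via $M^{-1}$, here one must be careful to manipulate the sum $\sum_j \widetilde{P}_j G_j(\cdot)$ as a single nonlinear map — the restriction $\overline{R}$ cannot in general be distributed across the $G_j$'s before applying the hypothesis, but it can afterwards, since the hypothesis is stated on the full sum. Beyond this bookkeeping, the proof is straightforward and can be summarized in one display equation for $n=1$ followed by the remark that induction extends it to all $n\geq 1$.
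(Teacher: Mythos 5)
Your proof is correct and follows exactly the route the paper intends: the paper gives no separate proof for Theorem \ref{Th:Equivalence_nonlinear}, stating only that ``the same calculations of Theorem \ref{Th:Equivalence}'' apply, which is precisely the base-case computation plus induction that you carry out. Your remark that the hypothesis is stated on the full sum $\sum_{j}\widetilde{P}_jG_j(\cdot)$ and must be applied before distributing $\overline{R}$ into the individual $\overline{G}_j$ terms is a correct and worthwhile clarification, but it does not change the argument.
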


\subsection{Nonlinear preconditioners for Newton's method}
In the manuscript \cite{dolean2016nonlinear}, it has been proposed to
use the fixed point equation of nonlinear RAS as a
preconditioner for Newton's method, in a spirit that goes back to
\cite{cai2001nonlinear,cai2002nonlinearly}. This method has been
called RASPEN (Restricted Additive Schwarz Preconditioned Exact
Newton) and it consists in applying Newton's method to the
fixed point equation of nonlinear RAS, that is,
\begin{equation}\label{eq:RASPEN}
\mathcal{F}(\uv)=\uv-\sum_{j\in \mathcal{J}} \widetilde{P}_j G_j(\uv)=0.
\end{equation} 
For a comprehensive discussion of this method, we refer to
\cite{dolean2016nonlinear}.  As done in \eqref{eq:fixedpoint} for the
linear case, we now introduce a substructured variant of RASPEN
and we call it SRASPEN (Substructured Restricted Additive
Schwarz Preconditioned Exact Newton). SRASPEN is obtained by
applying Newton's method to the fixed point equation of nonlinear
SRAS, that is,
\begin{equation*}
\overline{\mathcal{F}}(\vv):=\vv-\sum_{j\in \mathcal{J}} \overline{G}_j(\vv)=0.
\end{equation*} 
One can verify that the above equation $\overline{\mathcal{F}}(\vv)=0$ can also be written as
\begin{equation}\label{eq:SRASPEN}
\overline{\mathcal{F}}(\vv)= \overline{R}\overline{P}\vv-\sum_{j\in \mathcal{J}} \overline{R}\widetilde{P}_j G_j(\overline{P}\vv)=\overline{R}\mathcal{F}(\overline{P}\vv)=0.
\end{equation}
This formulation of SRASPEN provides its relation with RASPEN and
simplifies the task of computing the Jacobian of SRASPEN.

\subsubsection{Computation of the Jacobian and implementation details}\label{Sec:Impl}
To apply Newton's method, we need to compute the Jacobian of SRASPEN. Let $J_{\mathcal{F}}(\wv)$ and $J_{\overline{\mathcal{F}}}(\wv)$ denote the action of the Jacobian of RASPEN and SRASPEN on a vector $\wv$. Since these methods are closely related, indeed $\overline{\mathcal{F}}(\vv)=\overline{R}\mathcal{F}(\overline{P}\vv)$, we can immediately compute the Jacobian of $\overline{\mathcal{F}}$ once we have the Jacobian of $\mathcal{F}$, using the chain rule, $J_{\overline{\mathcal{F}}}(\vv)=\overline{R}J_{\mathcal{F}}(\overline{P}\vv)\overline{P}$. The Jacobian of $\mathcal{F}$ has been derived in \cite{dolean2016nonlinear} and we report here the main steps for the sake of completeness.
Differentiating equation \eqref{eq:RASPEN} with respect to $\uv$ leads to
\begin{equation}\label{eq:Jac_SRASPEN}
J_\mathcal{F}(\uv):=\frac{d\mathcal{F}}{d\uv}(\uv)=I-\sum_{j\in \mathcal{J}}\widetilde{P}_j\frac{d G_j}{d\uv}(\uv).
\end{equation}
Recall that the local inverse operators $G_j:V\rightarrow V_j$ are defined in 
equation \eqref{eq:definition_Gi} as the 
solutions of $R_jF(P_j G_j(\uv)+ (I-P_jR_j)\uv)=0$. Differentiating this relation yields
\begin{equation}\label{eq:diff_RASPEN}
\frac{dG_j}{d\uv}(\uv)=R_j-\left(R_j J\left(\uv^{(j)}\right)P_j\right)^{-1}R_j J\left(\uv^{(j)}\right), 
\end{equation}
where $\uv^{(j)}:=P_jG_j(\uv)+(I-P_jR_j)\uv$ is the volume solution vector in subdomain $j$ and $J$ is the Jacobian of the original nonlinear function $F$. Combining the above equations \eqref{eq:Jac_SRASPEN}-\eqref{eq:diff_RASPEN} and defining $\widetilde{\uv}^{(j)}:=P_jG_j(\overline{P}\vv)+(I-P_jR_j)\overline{P}\vv$, we get
\begin{equation}\label{eq:Jacobian_RASPEN}
J_{\mathcal{F}}(\uv)= \left(\sum_{j\in \mathcal{J}} \widetilde{P}_j\left(R_j J\left(\uv^{(j)}\right)P_j\right)^{-1}R_j J\left(\uv^{(j)}\right)\right),
\end{equation}
and
\begin{equation}\label{eq:Jacobian_SRASPEN}
J_{\overline{\mathcal{F}}}(\vv)= \overline{R}\left(\sum_{j\in \mathcal{J}} \widetilde{P}_j\left(R_j J\left(\widetilde{\uv}^{(j)}\right)P_j\right)^{-1}R_j J(\widetilde{\uv}^{(j)})\right)\overline{P},
\end{equation}
where we used the assumptions $\sum_{j\in \mathcal{J}}
\widetilde{P}_jR_j=I$ and $\overline{R}\overline{P}=I_S$.  We remark
that to assemble $J_{\mathcal{F}}(\uv)$ or to compute its action on a
given vector, one needs to calculate $J\left(\uv^{(j)}\right)$, that is,
evaluate the Jacobian of the original nonlinear function $F$ on the
subdomain solutions $\uv^{(j)}$. The subdomain solutions $\uv^{(j)}$ are
obtained evaluating $\mathcal{F}(\uv)$, that is performing one step of
RAS with initial guess equal to $\uv$.  A smart implementation can
use the local Jacobian matrices $R_j J\left(\uv^{(j)}\right)P_j$ that
are already computed by the inner Newton solvers while solving the
nonlinear problem on each subdomain, and hence no extra cost is
required to assemble this term. Further, the matrices $R_j
J\left(\uv^{(j)}\right)$ are different from the local Jacobian matrices
at very few columns corresponding to the degrees of freedom on the
interfaces and thus it suffices to only modify those specific entries. In a
non-optimized implementation, one can also directly evaluate the Jacobian of $F$
on the subdomain solutions $\uv^{(j)}$, without relying on already
computed quantities. Concerning $J_{\overline{\mathcal{F}}}(\vv)$, we
emphasize that $\widetilde{\uv}^{(j)}$ is the volume subdomain solution
obtained by substructured RAS starting from a substructured function
$\vv$.  Thus, like $\uv^{(j)}$, $\widetilde{\uv}^{(j)}$ is readily available
in Newton's iteration after evaluating the function
$\overline{\mathcal{F}}$.

From the computational point of view, \eqref{eq:Jacobian_RASPEN} and \eqref{eq:Jacobian_SRASPEN} have several implications as the substructured Jacobian $J_{\overline{\mathcal{F}}}$ is a matrix of dimension $\overline{N}\times \overline{N}$ where $\overline{N}$ is the number of unknowns on $S$, and thus is a much smaller matrix than $J_{\mathcal{F}}$, whose size is $N_v\times N_v$, with  
$N_v$ the number of unknowns in volume. 
On the one hand, if one prefers to assemble the Jacobian matrix, either because one wants to use a direct solver or because one wants to recycle the Jacobian for few iterations, then SRASPEN dramatically reduces the cost of the assembly of the Jacobian matrix.
On the other hand, if one prefers to use a Krylov method such as GMRES, then according to the discussion in Section \ref{Sec:GMRES},
SRASPEN better exploits the properties of the underlying domain decomposition method, and saves computational time by permitting to perform the orthogonalization in a much smaller space.
Further implementation details and a more extensive comparison are available in the numerical section \ref{sec:Num_section}.

\subsubsection{Convergence analysis of RASPEN and SRASPEN}\label{Sec:Comparison}
Theorem \ref{Th:Equivalence_nonlinear} gives an equivalence between nonlinear RAS and nonlinear SRAS. Are RASPEN and SRASPEN equivalent? Does Newton's method behave differently if applied to the volume or to the substructured fixed point equation, like it happens with GMRES (see Section \ref{Sec:GMRES})? 
In this section, we aim to answer these questions by discussing the convergence properties of the exact Newton's method applied to $\mathcal{F}$ and $\overline{\mathcal{F}}$.

Let us recall that, given two approximations $\uv^0$ and $\vv^0$, the exact Newton's method computes for $n\geq 1$,
\begin{equation*}
\uv^n=\uv^{n-1}-\left(\JF\left(\uv^{n-1}\right)\right)^{-1}\mathcal{F}\left(\uv^{n-1}\right)\quad \mbox{and}\quad \vv^n=\vv^{n-1}-\left(\JFS\left(\vv^{n-1}\right)\right)^{-1}\overline{\mathcal{F}}(\vv^{n-1}),
\end{equation*}
where $\JF\left(\uv^{n-1}\right)$ and $\JFS\left(\vv^{n-1}\right)$ are the Jacobian matrices respectively of $\mathcal{F}$ and $\overline{\mathcal{F}}$ evaluated at $\uv^{n-1}$ and $\vv^{n-1}$.
In this paragraph, we do not need a precise expression for $\JF$ and $\JFS$. However we recall that, the definition $\overline{\mathcal{F}}(\vv)=\overline{R}\mathcal{F}(\overline{P}\vv)$ and the chain rule derivation provides us the relation $\JFS(\vv)=\overline{R}\JF(\overline{P}\vv)\overline{P}$. If the operators $\overline{R}$ and $\overline{P}$ were square matrices, we would immediately obtain that RASPEN and SRASPEN are equivalent, due to the affine invariance theory for Newton's method \cite{deuflhard2010newton}. However, in our case, $\overline{R}$ and $\overline{P}$ are rectangular matrices and the mapping between spaces is of different dimensions. Nevertheless, in the following theorem, we show that RASPEN and SRASPEN provide the same iterates restricted to the interfaces under further assumptions on $\overline{R}$ and $\overline{P}$, which is a direct generalization of \eqref{eq:assumption2} to the nonlinear case.

\begin{theorem}[Equivalence between RASPEN and SRASPEN]\label{Th:Equivalence_RASPEN_SRASPEN}
Assume that the operators $\overline{R}$ and $\overline{P}$ satisfy 
\begin{equation}\label{eq:Assumption_RASPEN}
\overline{R}\mathcal{F}(\uv)=\overline{R}\mathcal{F}(\overline{P}\overline{R}\uv)=\overline{\mathcal{F}}(\overline{R}\uv).
\end{equation}
Given an initial guess $\uv^0\in V$ and its substructured restriction $\vv^0:=\overline{R} 
\uv^0\in \overline{V}$, define the sequences $\left\{ \uv^n\right\}$ and $\left\{\vv^n\right\}$ such that 
\[\uv^n=\uv^{n-1}-\left(\JF\left(\uv^{n-1}\right)\right)^{-1}\mathcal{F}\left(\uv^{n-1}\right)\quad \mbox{and} \quad \vv^n=\vv^{n-1}-\left(\JFS\left(\vv^{n-1}\right)\right)^{-1}\overline{\mathcal{F}}\left(\vv^{n-1}\right).\]
Then for every $n\geq 1$, $\overline{R}\uv^{n}=\vv^n$.
\end{theorem}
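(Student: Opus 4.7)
The plan is to proceed by induction on $n$. The base case $n=0$ is immediate from the hypothesis $\vv^0=\overline{R}\uv^0$, so the content is in the inductive step: assuming $\overline{R}\uv^{n-1}=\vv^{n-1}$, I want to show $\overline{R}\uv^n=\vv^n$. The crucial preparatory step is to differentiate the assumption \eqref{eq:Assumption_RASPEN} with respect to $\uv$. From $\overline{\mathcal{F}}(\vv)=\overline{R}\mathcal{F}(\overline{P}\vv)$ the chain rule already gives $\JFS(\vv)=\overline{R}\JF(\overline{P}\vv)\overline{P}$, as noted in Section \ref{Sec:Impl}. Differentiating instead the identity $\overline{R}\mathcal{F}(\uv)=\overline{R}\mathcal{F}(\overline{P}\overline{R}\uv)$ (valid for every $\uv\in V$) yields the key Jacobian-level relation
\begin{equation*}
\overline{R}\JF(\uv)=\overline{R}\JF(\overline{P}\overline{R}\uv)\,\overline{P}\overline{R},
\end{equation*}
which is the nonlinear analogue, for $k=1$, of Lemma \ref{Lemma:powerk}.

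With these two identities in hand, the inductive step reduces to a direct algebraic manipulation. Set $\delta\uv:=\uv^{n-1}-\uv^n=\bigl(\JF(\uv^{n-1})\bigr)^{-1}\mathcal{F}(\uv^{n-1})$, so that $\JF(\uv^{n-1})\,\delta\uv=\mathcal{F}(\uv^{n-1})$. Apply $\overline{R}$ on the left and use the Jacobian identity together with the inductive hypothesis $\overline{R}\uv^{n-1}=\vv^{n-1}$ to rewrite
\begin{equation*}
\overline{R}\JF(\uv^{n-1})\,\delta\uv
=\overline{R}\JF(\overline{P}\vv^{n-1})\,\overline{P}\overline{R}\,\delta\uv
=\JFS(\vv^{n-1})\,\overline{R}\,\delta\uv.
\end{equation*}
On the other hand, again by \eqref{eq:Assumption_RASPEN} and the inductive hypothesis, $\overline{R}\mathcal{F}(\uv^{n-1})=\overline{\mathcal{F}}(\overline{R}\uv^{n-1})=\overline{\mathcal{F}}(\vv^{n-1})$. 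Combining, I obtain $\JFS(\vv^{n-1})\,\overline{R}\,\delta\uv=\overline{\mathcal{F}}(\vv^{n-1})$, and assuming $\JFS(\vv^{n-1})$ is invertible (as is needed to even speak of the substructured Newton step) this gives $\overline{R}\,\delta\uv=\bigl(\JFS(\vv^{n-1})\bigr)^{-1}\overline{\mathcal{F}}(\vv^{n-1})=\vv^{n-1}-\vv^n$. Hence $\overline{R}\uv^n=\overline{R}\uv^{n-1}-\overline{R}\,\delta\uv=\vv^{n-1}-(\vv^{n-1}-\vv^n)=\vv^n$, closing the induction.

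The main conceptual obstacle is precisely the non-squareness of $\overline{R}$ and $\overline{P}$ that is flagged in the paragraph preceding the theorem: one cannot simply invoke affine invariance of Newton's method, because the change of variables is between spaces of different dimension. The differentiated form of \eqref{eq:Assumption_RASPEN}, namely $\overline{R}\JF(\uv)=\overline{R}\JF(\overline{P}\overline{R}\uv)\overline{P}\overline{R}$, is what plays the role of affine invariance here: it says that, after left-multiplication by $\overline{R}$, the linearisation of $\mathcal{F}$ can be restricted and prolonged through the substructure without loss of information. Once this is recognised, the Newton correction in volume, viewed on the skeleton, solves exactly the substructured Newton system. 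A minor technical point is that both Jacobians must be assumed invertible at the iterates considered, the same standing assumption as for RASPEN in \cite{dolean2016nonlinear}.
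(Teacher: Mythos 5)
Your proposal is correct and follows essentially the same route as the paper: both differentiate assumption \eqref{eq:Assumption_RASPEN} to obtain the Jacobian intertwining relation $\overline{R}\JF(\uv)=\JFS(\overline{R}\uv)\overline{R}$ and then push $\overline{R}$ through the Newton update, the only cosmetic difference being that you apply $\overline{R}$ to the linear system for the correction $\delta\uv$ while the paper explicitly inverts the intertwining relation to get $\overline{R}\bigl(\JF(\uv^{n-1})\bigr)^{-1}=\bigl(\JFS(\vv^{n-1})\bigr)^{-1}\overline{R}$. Your explicit remark that both Jacobians must be invertible is a fair point that the paper leaves implicit.
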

\begin{proof}
We first prove the equality $\overline{R}\uv^{1}=\vv^1$ by direct
calculations.  Taking the restriction of the RASPEN iteration, we
obtain
\begin{equation}\label{eq:proof1}
 \overline{R}\uv^{1}= \overline{R}\uv^0-\overline{R}\left(\JF\left(\uv^{0}\right)\right)^{-1}\mathcal{F}\left(\uv^{0}\right)=\vv^0-\overline{R}\left(\JF\left(\uv^{0}\right)\right)^{-1}\mathcal{F}\left(\uv^{0}\right).
\end{equation}
Now, due to the definition of $\overline{\mathcal{F}}$ and of $\vv^0$, and to the assumption \eqref{eq:Assumption_RASPEN}, we have 
\begin{equation}\label{eq:proof2}
\overline{\mathcal{F}}\left(\vv^{0}\right)=\overline{R}\mathcal{F}\left(\overline{P}\vv^0\right)=\overline{R}\mathcal{F}\left(\overline{P}\overline{R}\uv^0\right)=\overline{R}\mathcal{F}\left(\uv^0\right).
\end{equation}
Further, taking the Jacobian of assumption \eqref{eq:Assumption_RASPEN}, we have $\overline{R} J_{\mathcal{F}}(\uv^0)=J_{\overline{\mathcal{F}}} (\overline{R}\uv^0)\overline{R}$, which simplifies by taking the inverse of the Jacobians to
\begin{equation}\label{eq:proof3}
  \overline{R}\left( J_{\mathcal{F}}(\uv^0)\right)^{-1}=\left( J_{\overline{\mathcal{F}}}(\overline{R}\uv^0)\right)^{-1}\overline{R}.
\end{equation}
Finally substituting relations \eqref{eq:proof2} and \eqref{eq:proof3} into \eqref{eq:proof1} leads to
\begin{eqnarray*}
\overline{R}\uv^{1} &=&\vv^0-\overline{R}\left(\JF\left(\uv^{0}\right)\right)^{-1}\mathcal{F}\left(\uv^{0}\right) =\vv^0-\left( J_{\overline{\mathcal{F}}}(\overline{R}\uv^0)\right)^{-1}\overline{R}\mathcal{F}\left(\uv^{0}\right)\\
&=&\vv^0-\left( J_{\overline{\mathcal{F}}}(\vv^0)\right)^{-1}\overline{\mathcal{F}}\left(\vv^{0}\right) =\vv^1,
\end{eqnarray*}
 and the general case is obtained by induction.
\end{proof}

\section{Two-level nonlinear methods}\label{Sec:two-level}
RAS and SRAS can be generalized to two-level iterative schemes.
This has already been treated in detail for the linear case in
\cite{ciaramella-vanzan,ciaramella-vanzan-spectral,ciaramella-vanzan2}. In
this section, we introduce two-level variants for nonlinear RAS
  and SRAS, and also for the associated RASPEN and SRASPEN.
\subsection{Two-Level iterative methods}
To define a two-level method, we introduce a coarse space $V_0\subset V$, a restriction operator $R_0:V\rightarrow V_0$ and an interpolation operator $P_0:V_0\rightarrow V$. The nonlinear system $F$ can be projected onto the coarse space $V_0$, defining the coarse nonlinear function $F_0\left(\uv_0\right):=R_0F\left(P_0\uv_0\right)$, for every $\uv_0\in V_0$. Due to this definition, it follows immediately that $J_{F_0}\left(\uv_0\right)=R_0 J_F\left(P_0\uv_0\right)P_0$, $\forall \uv_0 \in V_0$. To compute a coarse correction we rely on the FAS approach \cite{FAS}. Given a current approximation $\uv$, the coarse correction $C_0(\uv)$ is computed as the solution of
\begin{equation}\label{eq:coarseequation}
F_0(C_0(\uv)+R_0\uv)=F_0(R_0\uv)-R_0F(\uv).
\end{equation} 
Two-level nonlinear RAS is described by Algorithm \ref{alg:Two-level_RAS} and it consists of a coarse correction followed by one iteration of nonlinear RAS.
\begin{algorithm}[t]
\setlength{\columnwidth}{\linewidth}
\caption{Two-level nonlinear RAS}
\begin{algorithmic}[1]
\STATE  Solve the coarse problem $F_0\left(\yv\right)=F_0\left(R_0\uv^k\right)-R_0F\left(\uv^k\right)$ and set $C_0\left(\uv^k\right)=\yv-R_0\uv^k$.
\STATE Add the coarse correction to the current iterate, $\uv^{k+\frac{1}{2}}=\uv^k+P_0C_0\left(\uv^k\right)$.
\STATE Compute one step of nonlinear RAS, $\uv^{k+1}=\sum_{j\in \mathcal{J}}\widetilde{P}_jG_j\left(\uv^{k+\frac{1}{2}}\right)$.
\STATE Repeat steps 1 to 3 until convergence.
\end{algorithmic}\label{alg:Two-level_RAS}
\end{algorithm}

We now focus on its substructured counterpart. We introduce a coarse substructured space $\overline{V}_0\subset \overline{V}$, a restriction operator $\overline{R}_0:\overline{V}\rightarrow \overline{V}_0$ and a prolongation operator $\overline{P}_0:\overline{V}_0\rightarrow \overline{V}$. We define the coarse substructured function as 
\begin{equation}\label{eq:two_level_sub_coarse}
\overline{\mathcal{F}}_{0}(\vv_0):=\overline{R}_0\overline{\mathcal{F}}(\overline{P}_0(\vv_0)), \quad\forall \vv_0\in \overline{V}_0.
\end{equation} 
From the definition it follows that
$J_{\overline{\mathcal{F}}_0}(\vv_0)=\overline{R}_0\JFS(\overline{P}_0\vv_0)\overline{P}_0$,
$\forall \vv_0\in \overline{V}_0$. There is a profound difference
between two-level nonlinear RAS and two-level nonlinear SRAS: in
the first one (Algorithm \ref{alg:Two-level_RAS}), the coarse function
is obtained restricting the original nonlinear system $F(\uv)=0$ onto a
coarse mesh. In the substructured version, the coarse substructured
function is defined restricting the fixed point equation of nonlinear
SRAS to $\overline{V}_0$. That is, the coarse substructured
function corresponds to a coarse version of SRASPEN.  Hence, we
remark that this algorithm is exactly the nonlinear counterpart of the
linear two-level algorithm described in
\cite{ciaramella-vanzan,ciaramella-vanzan2}. Two-level nonlinear
  SRAS is then defined in Algorithm \ref{alg:Two-level_SRAS}.
\begin{algorithm}[t]
\setlength{\columnwidth}{\linewidth}
\caption{Two-level iterative nonlinear SRAS}
\begin{algorithmic}[1]
\STATE  Solve the coarse problem $\overline{\mathcal{F}}_0\left(\yv\right)=\overline{\mathcal{F}}_{0}\left(\overline{R}_0\vv^k\right)-\overline{R}_0\overline{\mathcal{F}}\left(\vv^k\right)$ and set $C^S_0\left(\vv^k\right)=\yv-\overline{R}_0\vv^k$.
\STATE Add the coarse correction to the current iterate, $\vv^{k+\frac{1}{2}}=\vv^k+\overline{P}_0C^S_0\left(\vv^k\right)$.
\STATE Compute one-step of nonlinear SRAS, $\vv^{k+1}=\sum_{j\in \mathcal{J}}\overline{G}_j\left(\vv^{k+\frac{1}{2}}\right)$.
\STATE Repeat steps 1 to 3 until convergence.
\end{algorithmic}\label{alg:Two-level_SRAS}
\end{algorithm}
As in the linear case, numerical experiments will show that
two-level iterative nonlinear SRAS exhibits faster convergence in
terms of iteration counts compared to two-level nonlinear
  RAS. However, we remark that evaluating $F_0$ is rather cheap,
while evaluating $\overline{\mathcal{F}}_0$ could be quite expensive
as it requires to perform subdomain solves on the fine mesh. One
possible improvement is to approximate $\overline{\mathcal{F}}_0$
replacing $\overline{\mathcal{F}}$ in its definition with another
function which performs subdomain solves on a coarse mesh. Further, we
emphasize that a prerequisite of any domain decomposition method is
that the subdomain solves are cheap to compute in a high performance
parallel implementation, so that in such a setting evaluating
$\overline{\mathcal{F}}_0$ needs to be cheap as well.
\subsection{Two-level preconditioners for Newton's method}
Once we have defined the two-level iterative methods, we are ready to introduce the two-level versions of RASPEN and SRASPEN.
The fixed point equation of two-level nonlinear RAS is
\begin{equation}\label{eq:RASPEN2L}
\begin{aligned}
\FtwoL(\uv)&:=\uv-\sum_{j\in \mathcal{J}} \widetilde{P}_jG_j(\uv+P_0C_0(\uv)) \\
&=-P_0C_0(\uv)-\sum_{j\in \mathcal{J}} \widetilde{P}_jC_j(\uv+P_0C_0(\uv))=0,
\end{aligned}
\end{equation}
where we have introduced the correction operators $C_j(\uv):=G_j(\uv)-R_j\uv$.
Thus two-level RASPEN defined in \cite{dolean2016nonlinear} consists in applying Newton's method to the fixed point equation \eqref{eq:RASPEN2L}.

Similarly, the fixed point equation of two-level nonlinear SRAS is 
\begin{equation}\label{eq:SRASPEN2L}
\FtwoLS(\vv):=\vv-\sum_{j\in \mathcal{J}} \overline{G}_j\left(\vv+\overline{P}_0\overline{C}_0(\vv)\right)=-\overline{P}_0\overline{C}_0(\vv)-\sum_{j\in \mathcal{J}} \overline{C}_j\left(\vv+\overline{P}_0\overline{C}_0(\vv)\right)=0,
\end{equation}
where the correction operators $\overline{C}_j$ are defined as
$\overline{C}_j(\vv):=\overline{G}_j(\vv)-\overline{R}\widetilde{P}_jR_j\overline{P}\vv$.
Two-level SRASPEN consists in applying Newton's method to the
fixed point equation \eqref{eq:SRASPEN2L}.

\section{Numerical results}\label{sec:Num_section}

We discuss three different examples in this section to illustrate
our theoretical results. In the first example, we consider a linear
problem where we study the GMRES performance when GMRES is
applied to the preconditioned volume system and the preconditioned
substructured system. In the next two examples, we present numerical
results in order to compare Newton's method, NKRAS
  \cite{cai2011inexact}, nonlinear RAS, nonlinear SRAS, RASPEN, and
  SRASPEN for the solution of a one-dimensional Forchheimer
equation and for a two-dimensional nonlinear diffusion equation.
\subsection{Linear example}

%\marginpar{\MG{what is the precise decomposition used?}}
We consider the unit cube $\Omega:=(0,1)^3$ decomposed into $N$ equally-sized bricks
with overlap, each discretized with $27000$ degrees of
freedom. The size of the overlap is $\delta:= 4\times h$.  In Table
\ref{Tab:3D} we study the computational effort and memory
required by GMRES when applied to the preconditioned volume system
\eqref{eq:precond_vol} (GMRES-RAS) and to the preconditioned
substructured system \eqref{eq:precond_sub} (GMRES-SRAS).  We let the
number of subdomains grow, while keeping their sizes constant, that is
the global problem becomes larger as $N$ increases.  We report the
computational times to reach a relative residual smaller than
$10^{-8}$, and the number of gigabytes required to store the
orthogonal matrices of the Arnoldi iteration, both for the volume and
substructured implementations.

\begin{table}[]
\centering
\begin{small}
\setlength{\tabcolsep}{5pt}
\begin{tabular}{ c | c c c }
$N_v(N)-\overline{N}$ &  729000(27)-92944 & 1728000(64)-246456 &   3375000(125)-511712  \\ \hline
GMRES-RAS & 34.31 &  134.71  &  401.27  \\
GMRES-SRAS & 33.08  & 132.81 & 393.78 \\ \vspace{0.1cm}
\end{tabular}
\end{small}\qquad
\begin{small}
\begin{tabular}{ c | c c c }
$N_v(N)-\overline{N}$ &  729000(27)-92944 & 1728000(64)-246456 &   3375000(125)-511712   \\ \hline
GMRES-RAS & 0.09 &  0.34  &  0.81  \\
GMRES-SRAS & 0.01  &  0.05 & 0.12 \\
\end{tabular}
\end{small}
\caption{On the top, time in seconds required by GMRES-RAS and
  GMRES-SRAS to reach a relative error smaller than $10^{-8}$ for
  increasingly larger problems. At the bottom, memory use
  expressed in gigabytes to store the Arnoldi orthogonal matrices in
  both GMRES implementations.}
\label{Tab:3D}
\end{table}

Table \ref{Tab:3D} shows that GMRES applied to the preconditioned
substructured system is slightly faster in terms of computational time
compared to the volume implementation. This advantage becomes
more evident as the global problem becomes larger. We emphasize that
GMRES required the same number of iterations to reach the tolerance
for both methods in all cases considered. Thus the faster time to
  solution of GMRES-SRAS is due to the smaller number of floating
point operations that GMRES-SRAS has to perform since the
orthogonalization steps are performed in a much smaller
space. Furthermore, GMRES-SRAS significantly outperforms GMRES-RAS in
terms of memory requirements; in this particular case, GMRES-SRAS
computes and stores orthogonal matrices which are about seven times
smaller than the ones used by GMRES-RAS.

\subsection{Forchheimer equation in 1D}
Forchheimer equation is an extension of the Darcy equation for high flow rates, where the linear relation between the flow velocity and the gradient flow does not hold anymore.
%\marginpar{\MG{Either 'The Forheimer model' or 'Forchheimer's model'}}
In a one dimensional domain $\Omega:=(0,1)$, the Forchheimer model is 
\begin{equation}\label{eq:Forch}
\begin{aligned}
&q(-\lambda(x)u(x)^\prime))^\prime = f(x)\quad \text{in } \Omega,\\
&u(0)=u_L\quad \text{and}\quad u(1)=u_R,
\end{aligned}
\end{equation}
where $u_L,u_R\in \mathbb{R}$, $\lambda(x)$ is a positive and bounded
permeability field and
$q(y):=\sign(y)\frac{-1+\sqrt{1+4\gamma|y|}}{2\gamma}$, with
$\gamma>0$. To discretize \eqref{eq:Forch}, we use the finite volume
scheme described in detail in \cite{dolean2016nonlinear}. In our
numerical experiments, we set $\lambda(x)=2+\cos(5\pi x)$,
%\marginpar{\MG{what is 'e' in $u(1)=e$?}}
$f(x)=50\sin(5\pi x)e^x$, $\gamma=1$, $u(0)=1$ and $u(1)=e^1$. The
solution field $u(x)$ and the force field $f(x)$ are shown in Figure
\ref{Fig:Forch}.
\begin{figure}
\centering
\includegraphics[width=0.49\textwidth]{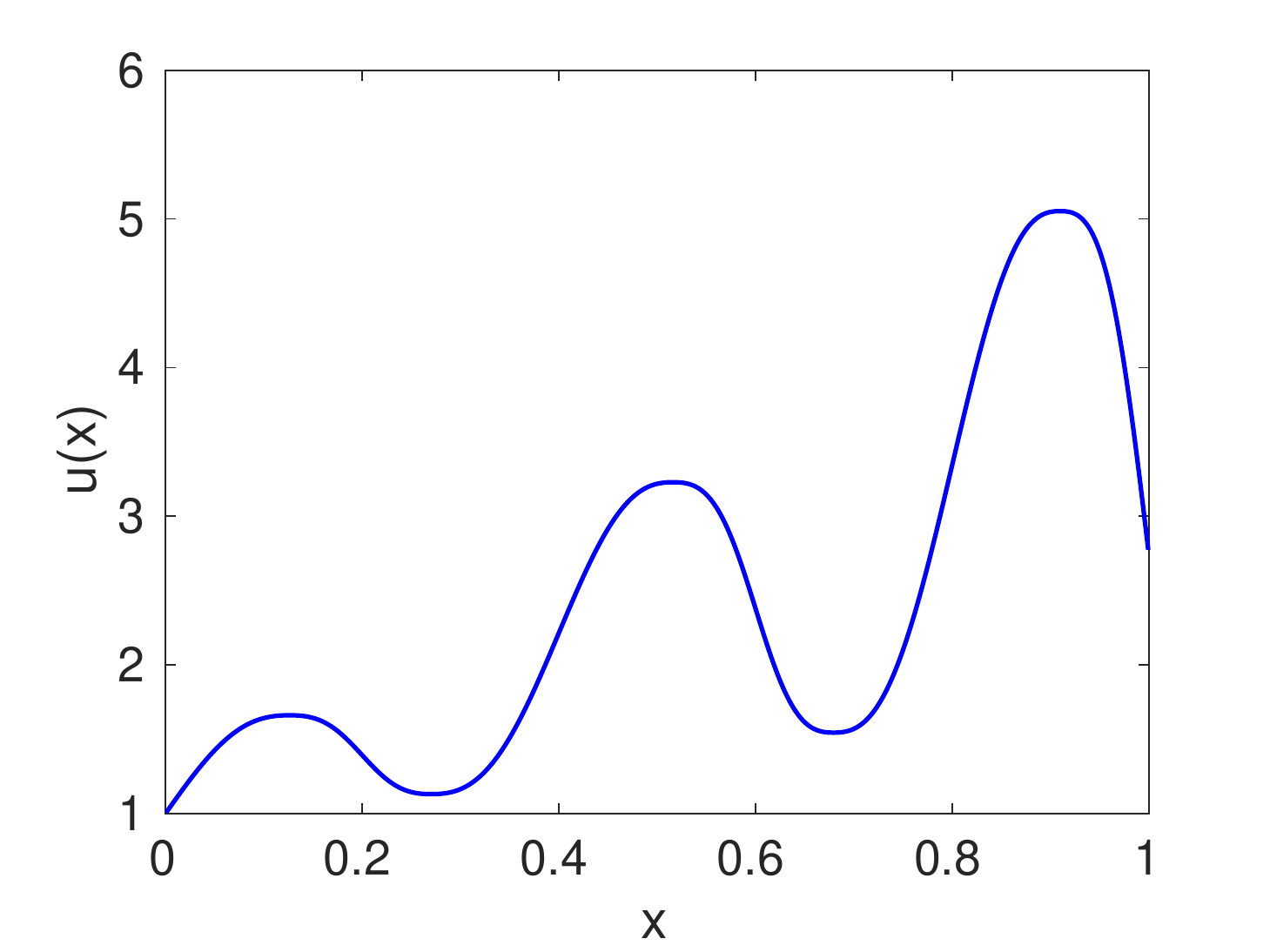}
\includegraphics[width=0.49\textwidth]{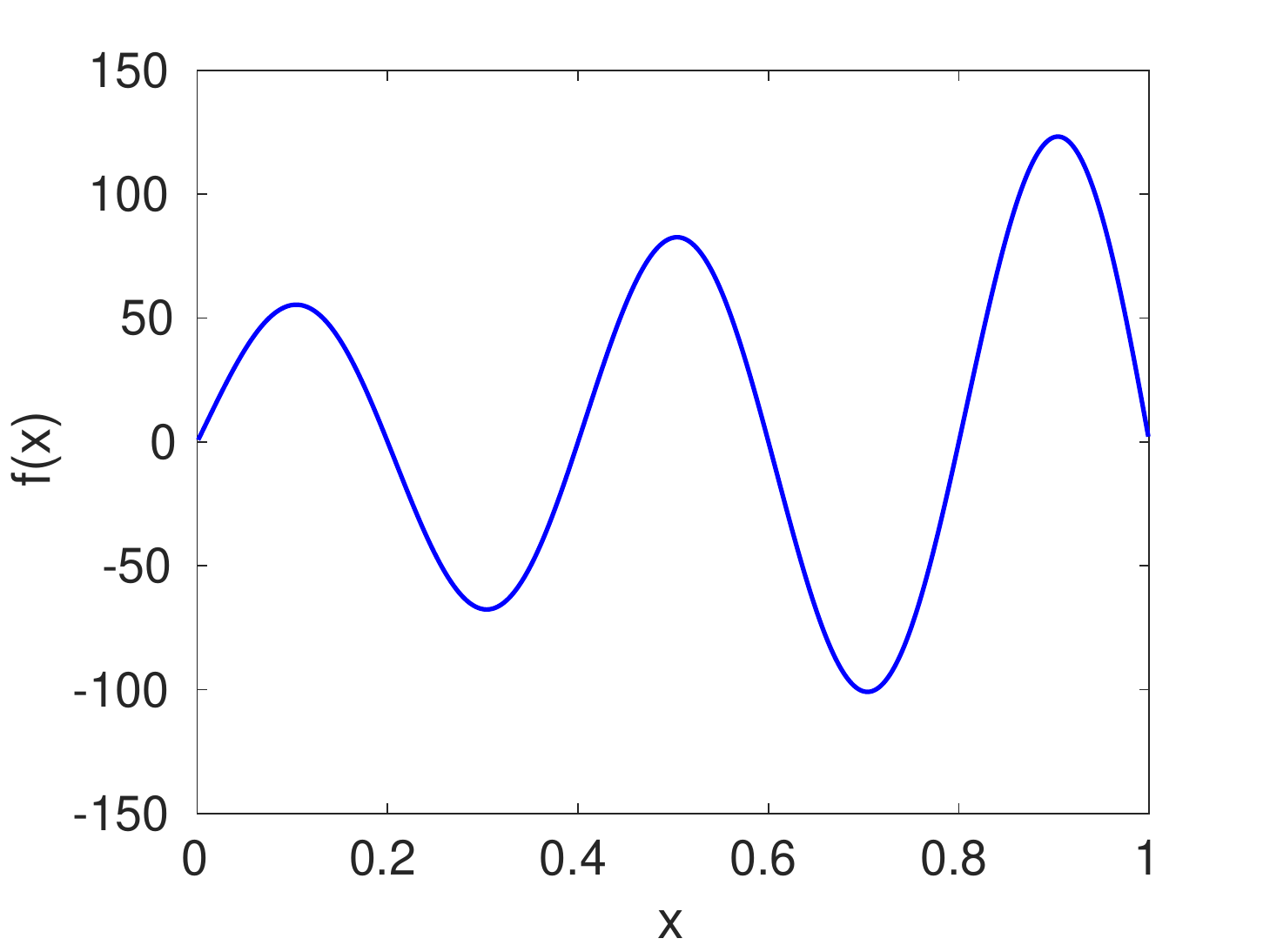}
\caption{Solution field $u(x)$ of Forchheimer equation (left panel) and force term $f(x)$ (right panel).}\label{Fig:Forch}
\end{figure}
We then study the convergence behavior of our different
methods. Figure \ref{Fig:Forch_Conv} shows how the relative error
decays for the different methods and for a decomposition into 20
subdomains (left panel) and 50 subdomains (right panel). The initial
guess is equal to zero for all these methods.
\begin{figure}
\centering
\includegraphics[width=0.49\textwidth]{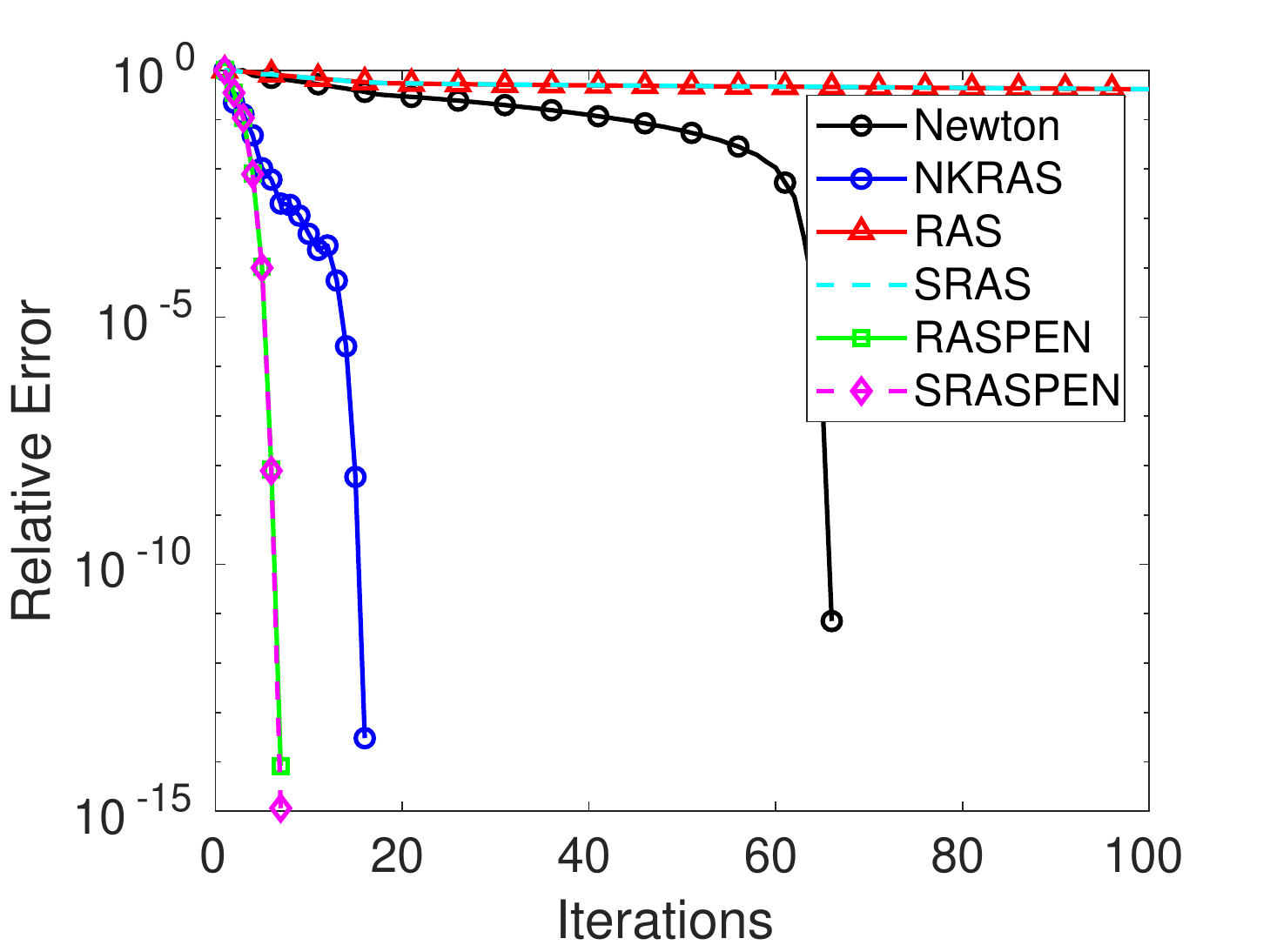}
\includegraphics[width=0.49\textwidth]{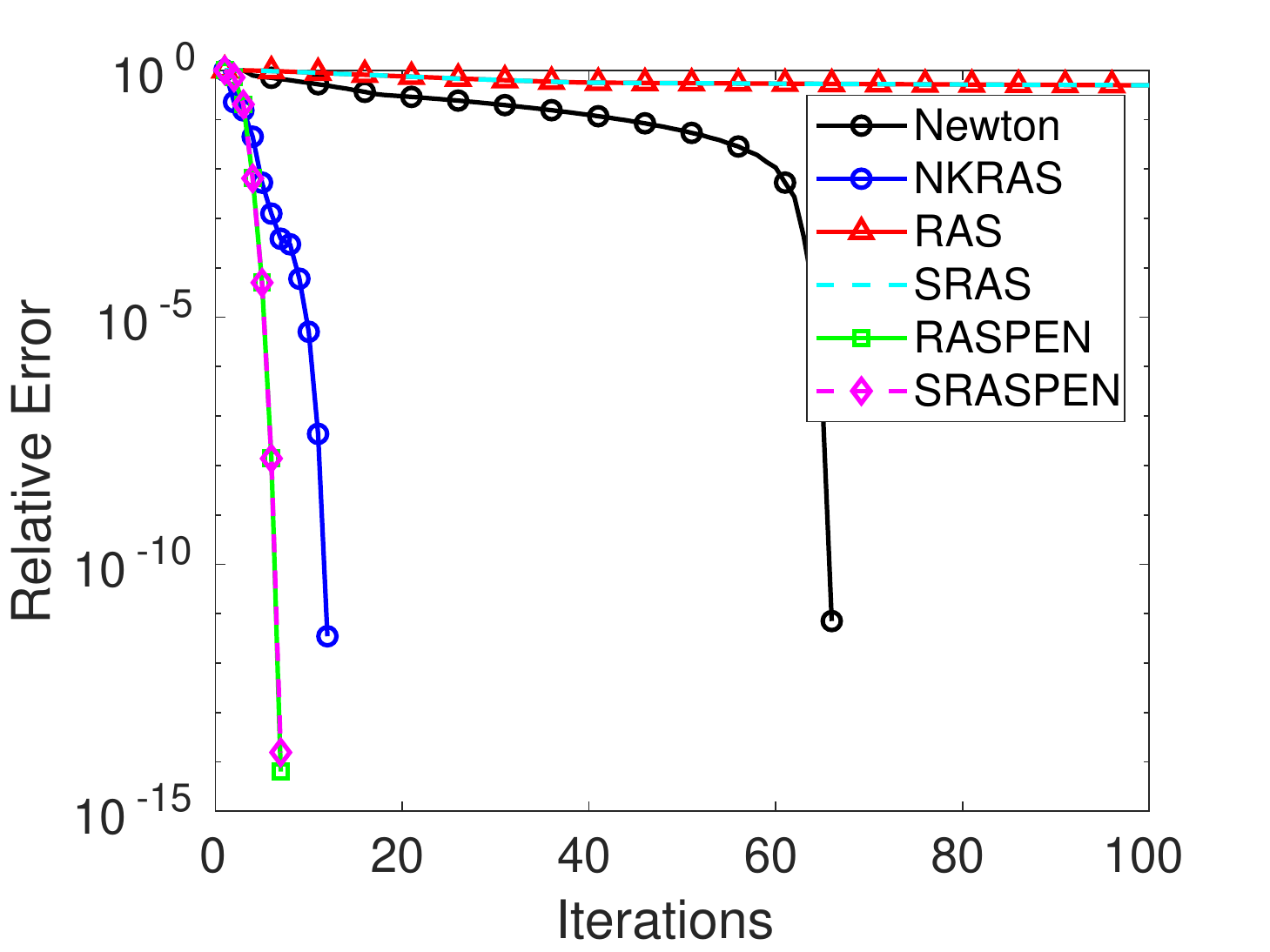}
\caption{Convergence behavior for Newton's method, NKRAS,
  nonlinear RAS, nonlinear SRAS, RASPEN and SRASPEN
  applied to Forchheimer equation. On the left, the
  simulation refers to a decomposition into 20 subdomains while on the
  right we consider 50 subdomains. The mesh size is $h=10^{-3}$ and
  the overlap is $8h$.}\label{Fig:Forch_Conv}
\end{figure}
Both plots in Figure \ref{Fig:Forch_Conv} show that the
%\marginpar{\MG{The label in the figure uses NKRAS, but the text NKS-RAS. Better use NKRAS everywhere?}}
convergence rate of iterative nonlinear RAS and nonlinear SRAS
is the same and very slow. As expected, NKRAS
with line search converges better than Newton's method. Further,
RASPEN and SRASPEN converge in the same number
of outer Newton iterations. Moreover, it seems that the convergence
of RASPEN and SRASPEN is not affected by the number of
subdomains. However, these plots do not tell the whole story, as one
should focus not only on the number of iterations but also on the cost
of each iteration. To compare the cost of an iteration of RASPEN
and SRASPEN, we have to distinguish two cases, that is, if one
solves the Jacobian system directly or with some Krylov methods, e.g.,
GMRES. First, suppose that we want to solve the Jacobian system with a
direct method and thus we need to assemble and store the
Jacobians. From the expressions in equation
\eqref{eq:Jacobian_SRASPEN} we remark that the assembly of the
Jacobian of RASPEN requires $N\times N_v$ subdomain solves,
where $N$ is the number of subdomains and $N_v$ is the number of
unknowns in volume. On the other hand, the assembly of the Jacobian of
SRASPEN requires $N\times \overline{N}$ solves, where
$\overline{N}$ is the number of unknowns on the substructures and
$\overline{N}\ll N_v$. Thus, while the assembly of $\JF$ is
prohibitive, it can still be affordable to assemble $\JFS$. Further,
the direct solution of the Jacobian system is feasible as $\JFS$ has
size $\overline{N}\times \overline{N}$. Suppose now that we solve the
Jacobian systems with GMRES. Let us indicate with $I(k)$ and $I^S(k)$
the number of GMRES iterations to solve the volume and substructured
Jacobian systems at the $k$-th outer Newton iteration. Each GMRES
iteration requires $N$ subdomain solves which can be performed in
parallel. In our numerical experiment, we have observed that generally
$I^S(k)\leq I(k)$, with $I(k)-I^S(k)\approx 0,1,2$, that is GMRES
requires the same number of iterations or slightly less to solve the
substructured Jacobian system compared to the volume one.

To better compare these two methods, we follow
\cite{dolean2016nonlinear} and introduce the quantity $L(n)$ which
counts the number of subdomain solves performed by these two
methods till iteration $n$, taking into account the advantages of a
parallel implementation. We set $L(n)=\sum_{k=1}^n L_{in}^k + I(k)$,
where $L_{in}^k$ is the maximum over the subdomains of the number of
Newton iterations required to solve the local subdomain problems
at iteration $k$. The number of linear solves performed by GMRES
should be $I(k)\times N$, but as the $N$ linear solves can be
performed in parallel, the total cost of GMRES corresponds
approximately to $I(k)$ linear solves.  Figure \ref{Fig:Forch_LN}
shows the error decay as a function of $L(n)$. We note that the two
methods require approximately the same computational cost and SRASPEN
is slightly faster.
\begin{figure}
\centering
\includegraphics[width=0.49\textwidth]{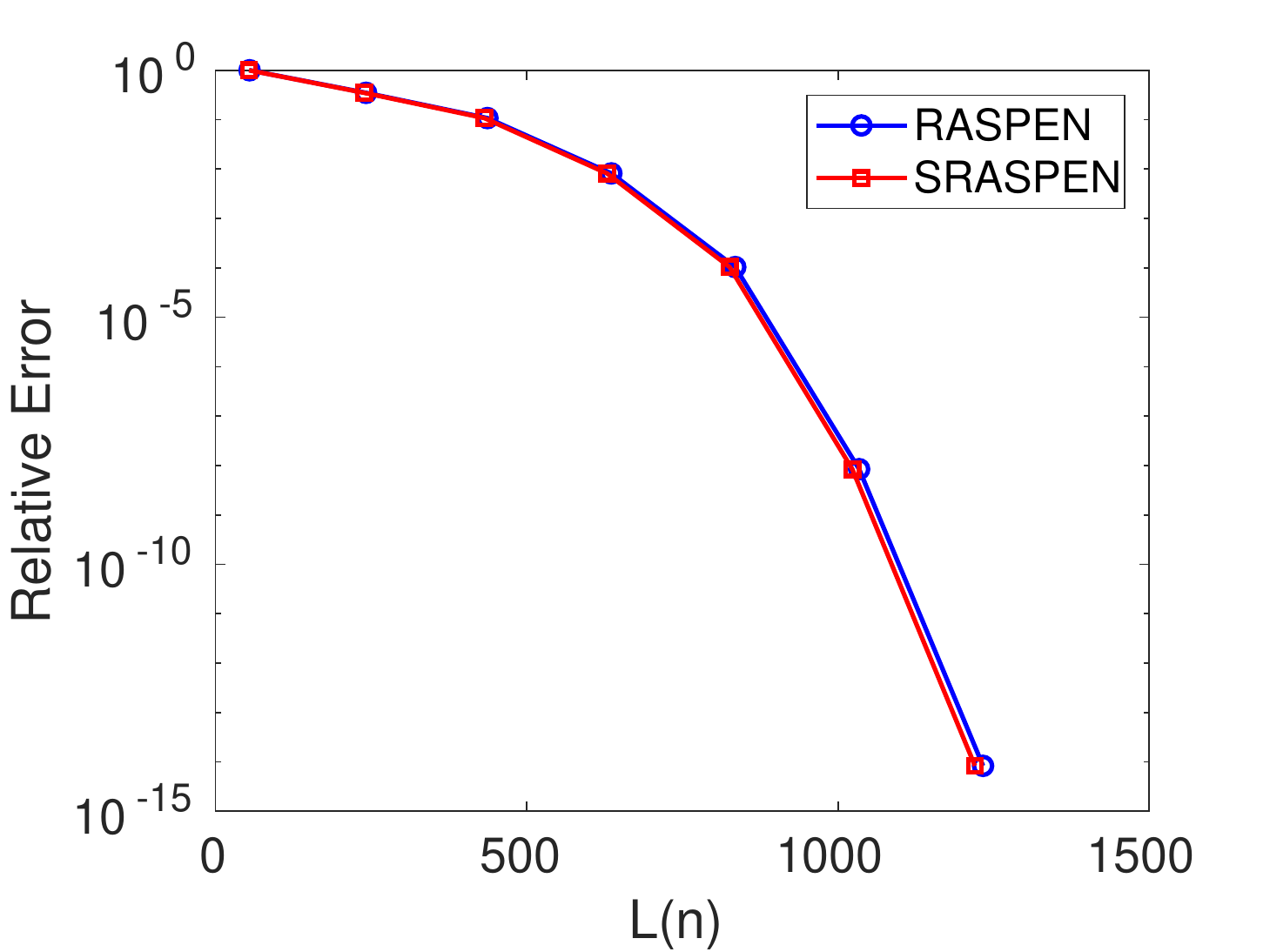}
\includegraphics[width=0.49\textwidth]{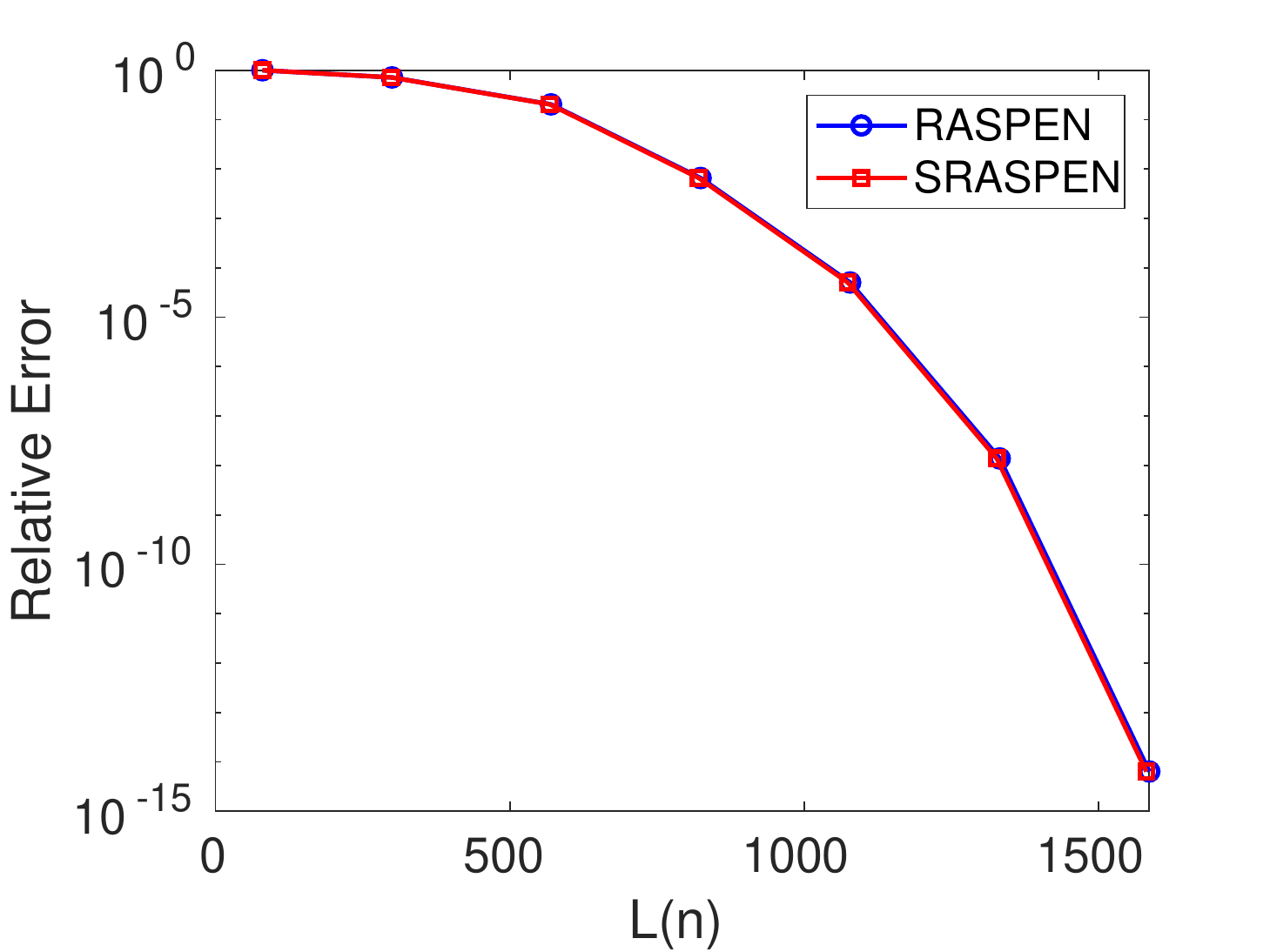}
\caption{Relative error decay for RASPEN and SRASPEN applied to
  Forchheimer equation with respect to the number of linear
  solves. On the left, the simulation refers to a decomposition into
  20 subdomains while on the right we consider 50 subdomains. The mesh
  size is $h=10^{-3}$.}\label{Fig:Forch_LN}
\end{figure}
For the decomposition into 50 subdomains, RASPEN requires on average
91.5 GMRES iterations per Newton iteration, while SRASPEN
requires an average of 90.87 iterations. The size of the substructured
space $\overline{V}$ is $\overline{N}=98$. For the decomposition into
20 subdomains, RASPEN requires an average of 40 GMRES
iterations per Newton's iteration, while SRASPEN needs 38
iterations. The size of $\overline{V}$ is $\overline{N}=38$, which
means that GMRES reaches the given tolerance of $10^{-12}$ after
exactly $\overline{N}$ steps, which is the size of the substructured
Jacobian. Under these circumstances, it can be convenient to actually
assemble $\JFS$, as it requires $\overline{N}\times N$ subdomain
solves which is the total cost of GMRES. Furthermore, the
$\overline{N}\times N$ subdomain solves are embarrassingly parallel,
while the $\overline{N}\times N$ solves of GMRES can be parallelized
in the spatial direction, but not in the iterative one.  As future
work, we believe it will be interesting to study the convergence of a
Quasi-Newton method based on SRASPEN, where one assembles
the Jacobian substructured matrix after every few outer Newton
iterations, reducing the overall computational cost.

As a final remark, we specify that Figure \ref{Fig:Forch_LN} has been
obtained setting a zero initial guess for the nonlinear subdomain
problems. However, at the iteration $k$ of RASPEN one can use the
subdomain restriction of the updated volume solution, that is $R_j
\uv^{k-1}$, which has been obtained by solving the volume Jacobian
system at iteration $k-1$, and is thus generally a better initial
guess for the next iteration. On the other hand in SRASPEN, one
could use the subdomain solutions computed at iteration $k-1$,
i.e. $\uv_i^{k-1}$, as initial guess for the nonlinear subdomain
problems, as the substructured Jacobian system corrects only the
substructured values.  Numerical experiments showed that with this
particular choice of initial guess for the nonlinear subdomain
problems, SRASPEN requires generally more Newton iterations to
solve the local problems. In this setting, there is not a method that
is constantly faster than the other as it depends on a delicate
trade-off between the better GMRES performance and the need to perform
more Newton iterations for the nonlinear local problems in
SRASPEN.

\subsection{Nonlinear Diffusion}
In this subsection we consider the nonlinear diffusion problem on a square domain $\Omega:=(0,1)^2,$
\begin{equation}
\begin{aligned}
-\nabla\cdot \left(1+u(x)^2\right)\nabla u(x)&=f,\quad\text{ in } \Omega,\\
u(x)&=g(x)\quad \text{on }\partial \Omega,
\end{aligned}
\end{equation}
where the right hand side $f$ is chosen such that $u(x)=\sin(\pi x)\sin(\pi y)$ is the exact solution. We start all these methods with an initial guess $u^0(x)=10^5$, so that we start far away from the exact solution, and hence Newton's method exhibits a long plateau before quadratic convergence begins. 
%\marginpar{\MG{The curves are hard to distinguish in the top row, there should be 5 but one sees 4 only. The color choice in other figures seems better: just one and the same color for all elements of one of the results}-\PK{DONE.}}
\begin{figure}[]
\centering
\includegraphics[width=0.49\textwidth]{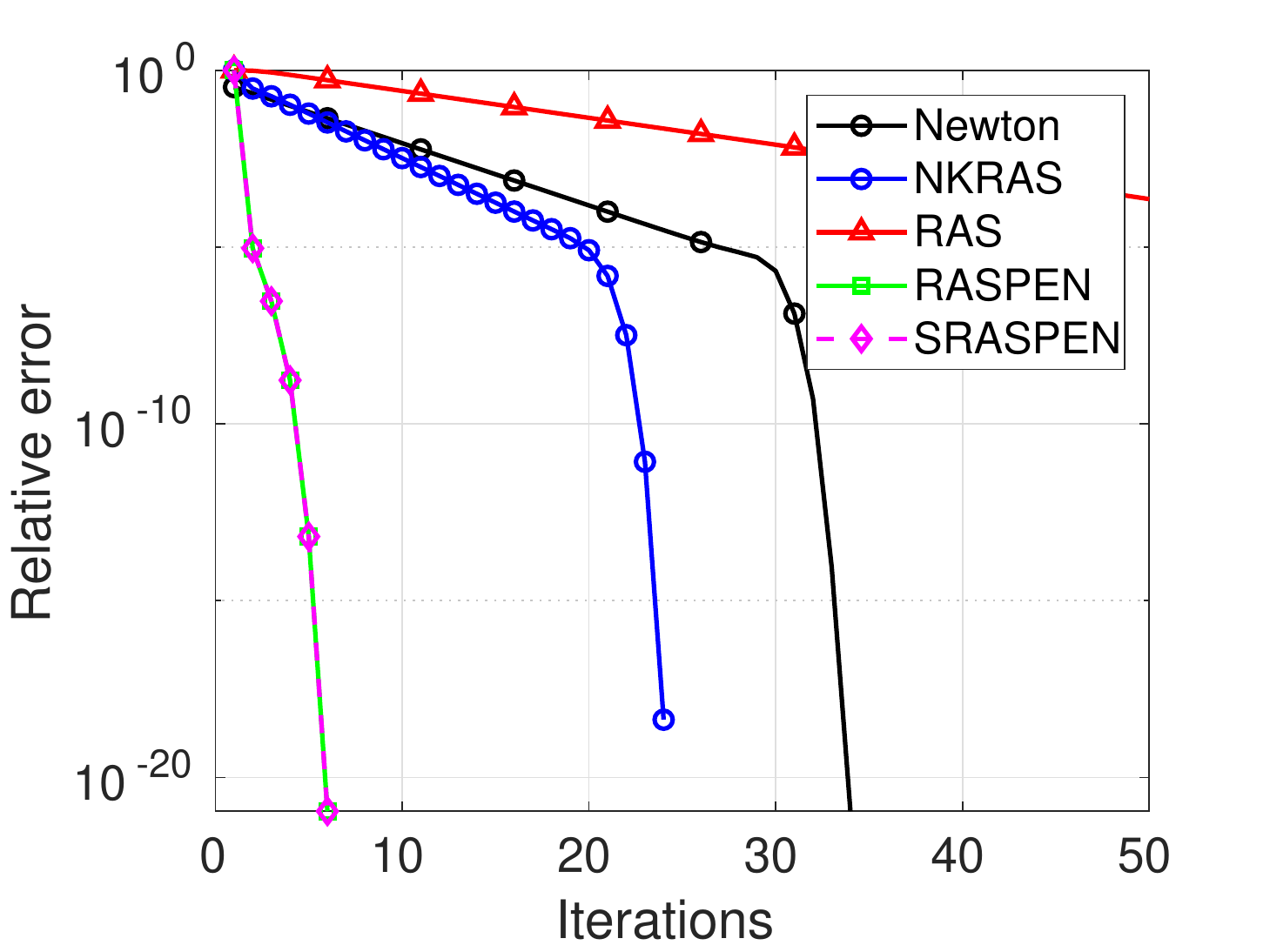}
\includegraphics[width=0.49\textwidth]{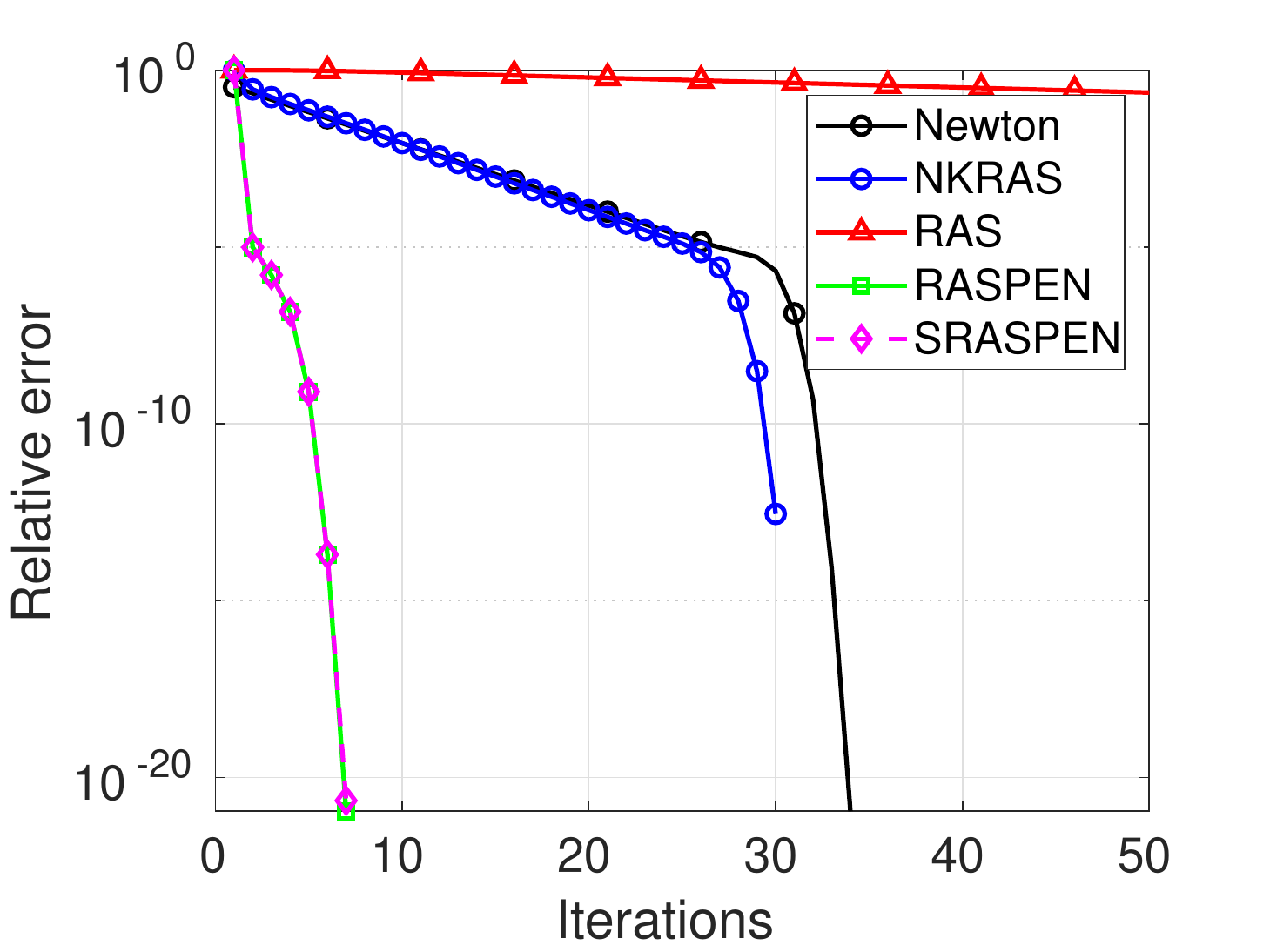}\\
\includegraphics[width=0.49\textwidth]{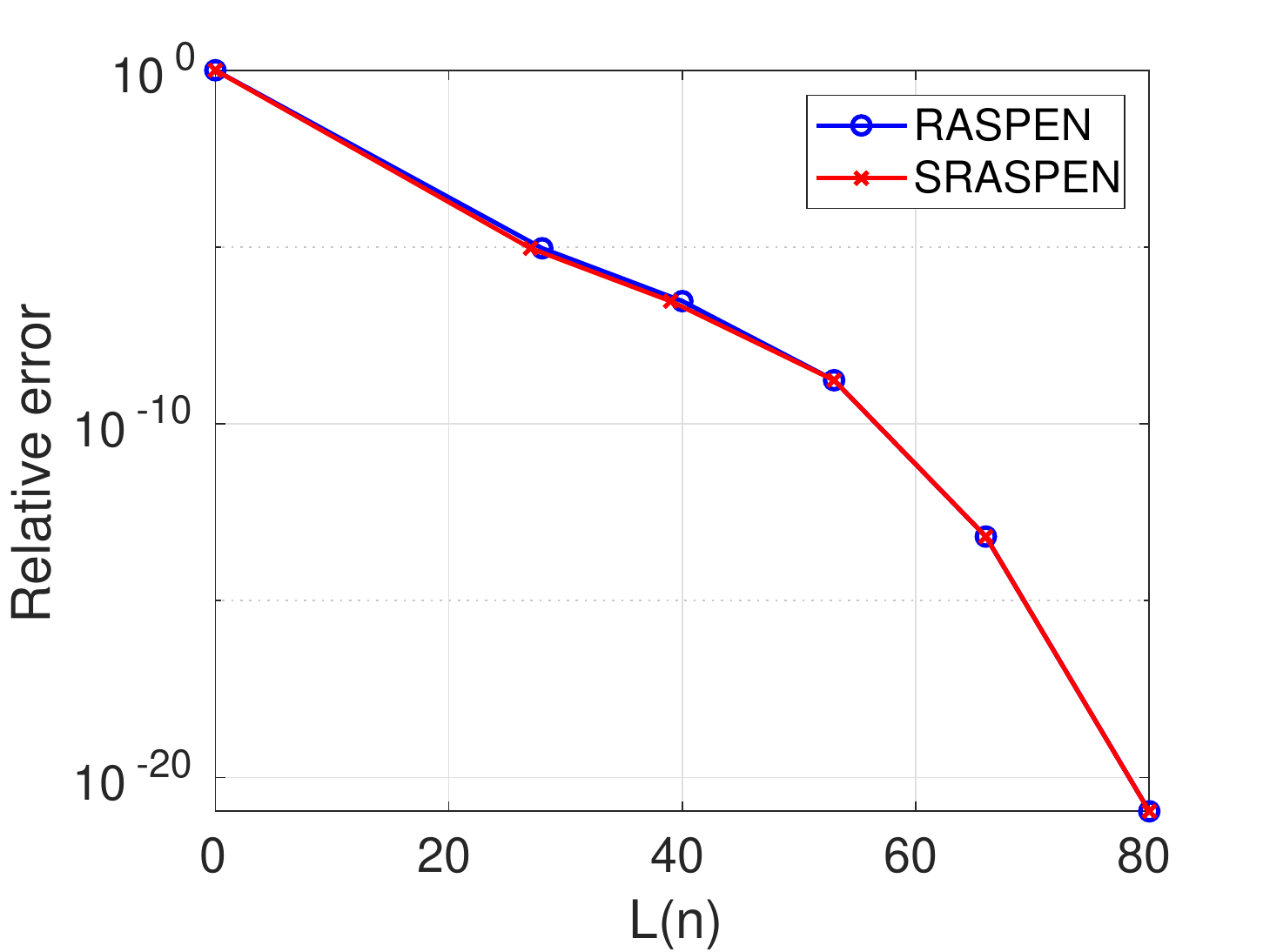}
\includegraphics[width=0.49\textwidth]{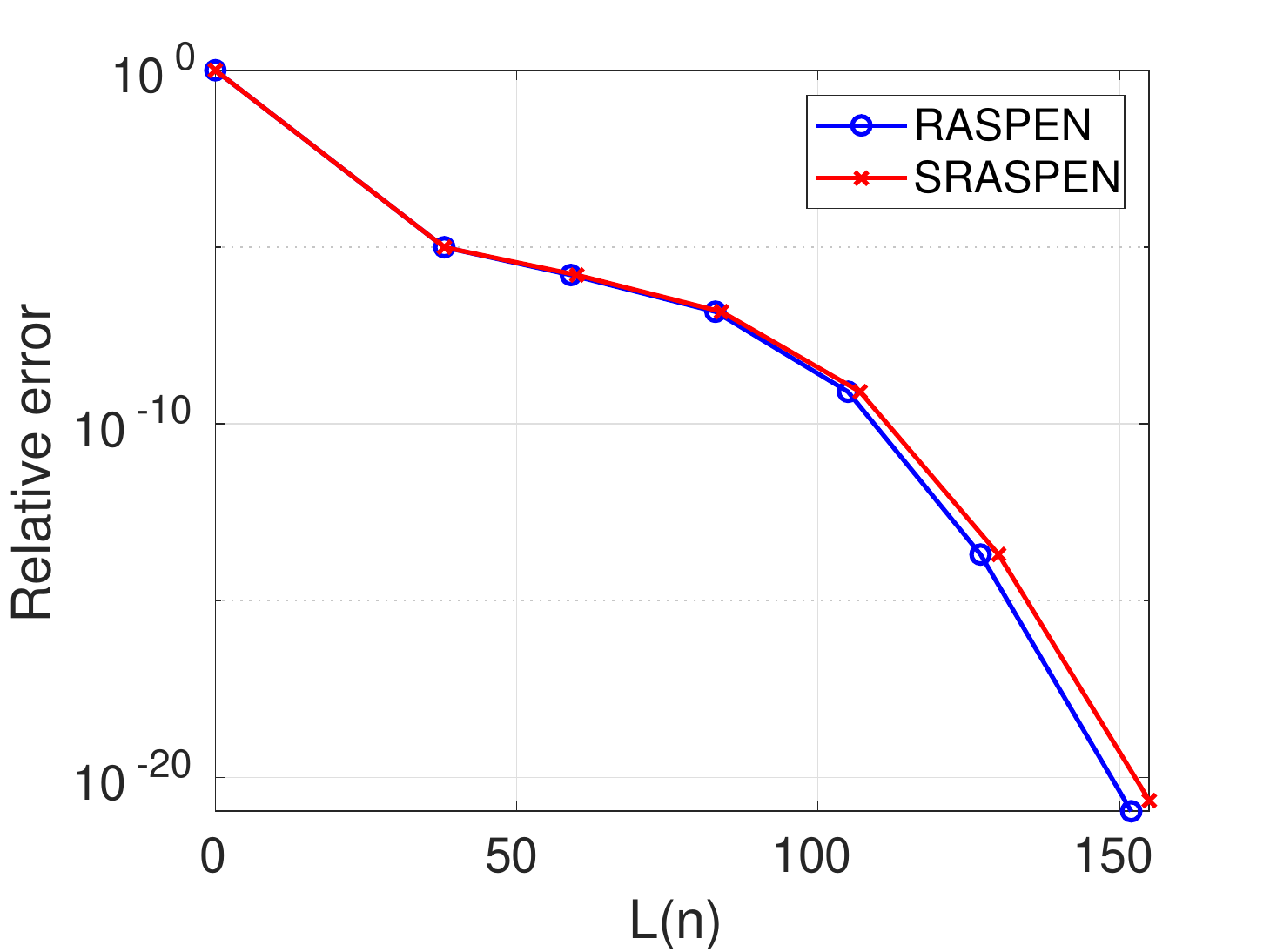}
\caption{Relative error decay versus the number of iterations (top
  row) and error decay versus the number of linear solves (bottom
  row). The left figures refer to a decomposition into four subdomains,
  the right figures to a decomposition into 25 subdomains. The
  mesh size is $h=0.012$ and the overlap is $8h$.}\label{Fig:NLD_it}
\end{figure}
Figure \ref{Fig:NLD_it} shows the convergence behavior for the
different methods as function of the number of iterations and the
number of linear solves. The average number of GMRES iterations is
8.1667 for both RASPEN and SRASPEN for the four subdomain
decomposition. For a decomposition into 25 subdomains, the average
number of GMRES iterations is 19.14 for RASPEN and 19.57 for
SRASPEN. We remark that as the number of subdomains increases,
GMRES needs more iterations to solve the Jacobian system. This is
consistent with the interpretation of \eqref{eq:Jacobian_SRASPEN} as a
Jacobian matrix $J\left(\uv^{(j)}\right)$ preconditioned by the additive
operator $\sum_{j\in \mathcal{J}} \left(R_j
J\left(\uv^{(j)}\right)P_j\right)^{-1}$; We expect this preconditioner
not to be scalable since it does not involve a coarse correction.

We conclude this section by showing the convergence behavior for the
two-level variants of nonlinear RAS, nonlinear SRAS, RASPEN, and
SRASPEN. We use a coarse grid in volume taking half of the points in
$x$ and $y$, and a coarse substructured grid taking half of the
unknowns as depicted in Figure \ref{fig:subdomain}. The interpolation
and restriction operators $P_0,R_0,\overline{P}_0$ and
$\overline{R}_0$ are the classical linear interpolation and fully
weighting restriction operators defined in Section
\ref{Sec:two-level}. From Figure \ref{Fig:NLD_twolevel},
\begin{figure}[]
\centering
\includegraphics[width=0.49\textwidth]{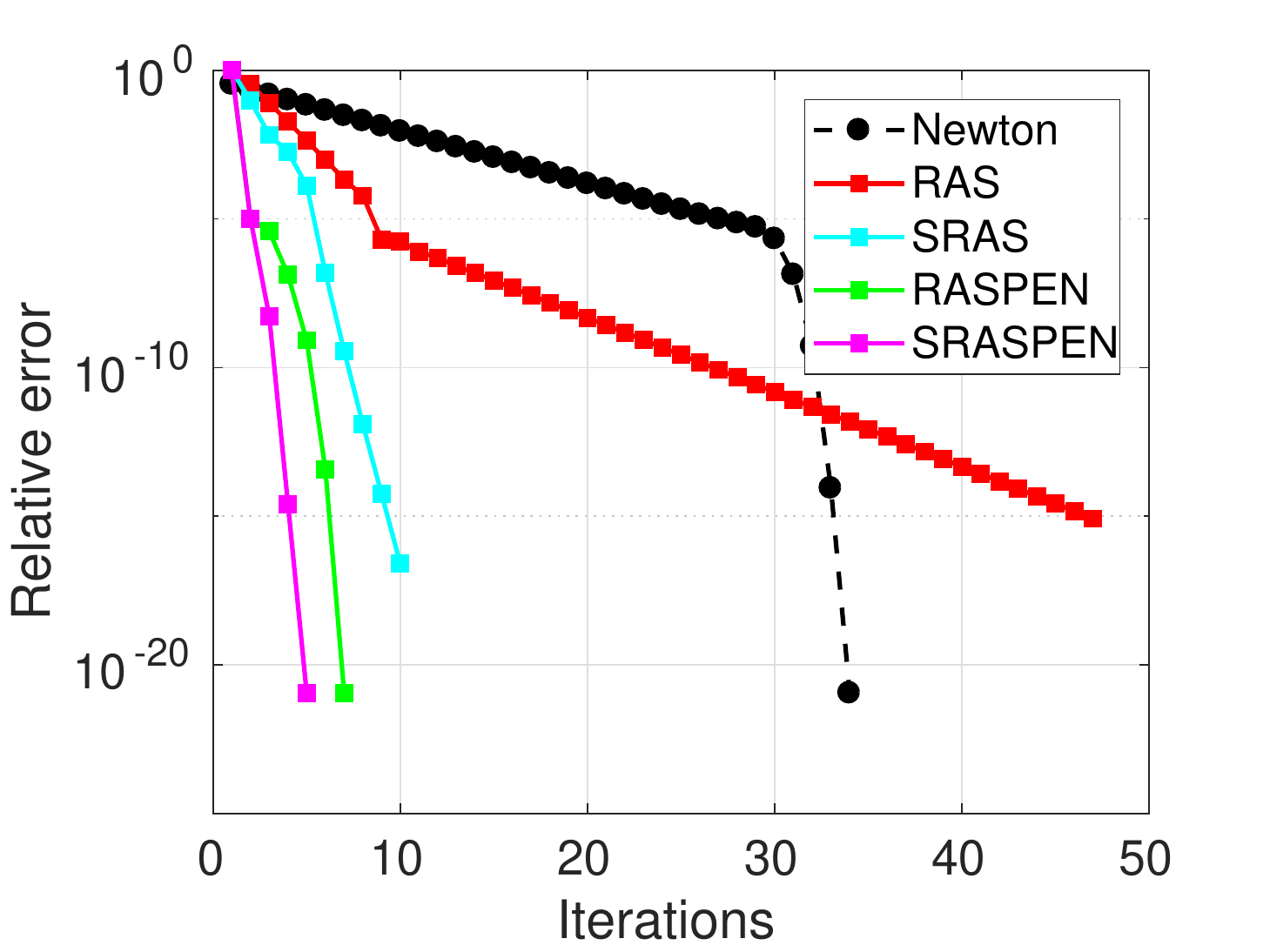}
\includegraphics[width=0.49\textwidth]{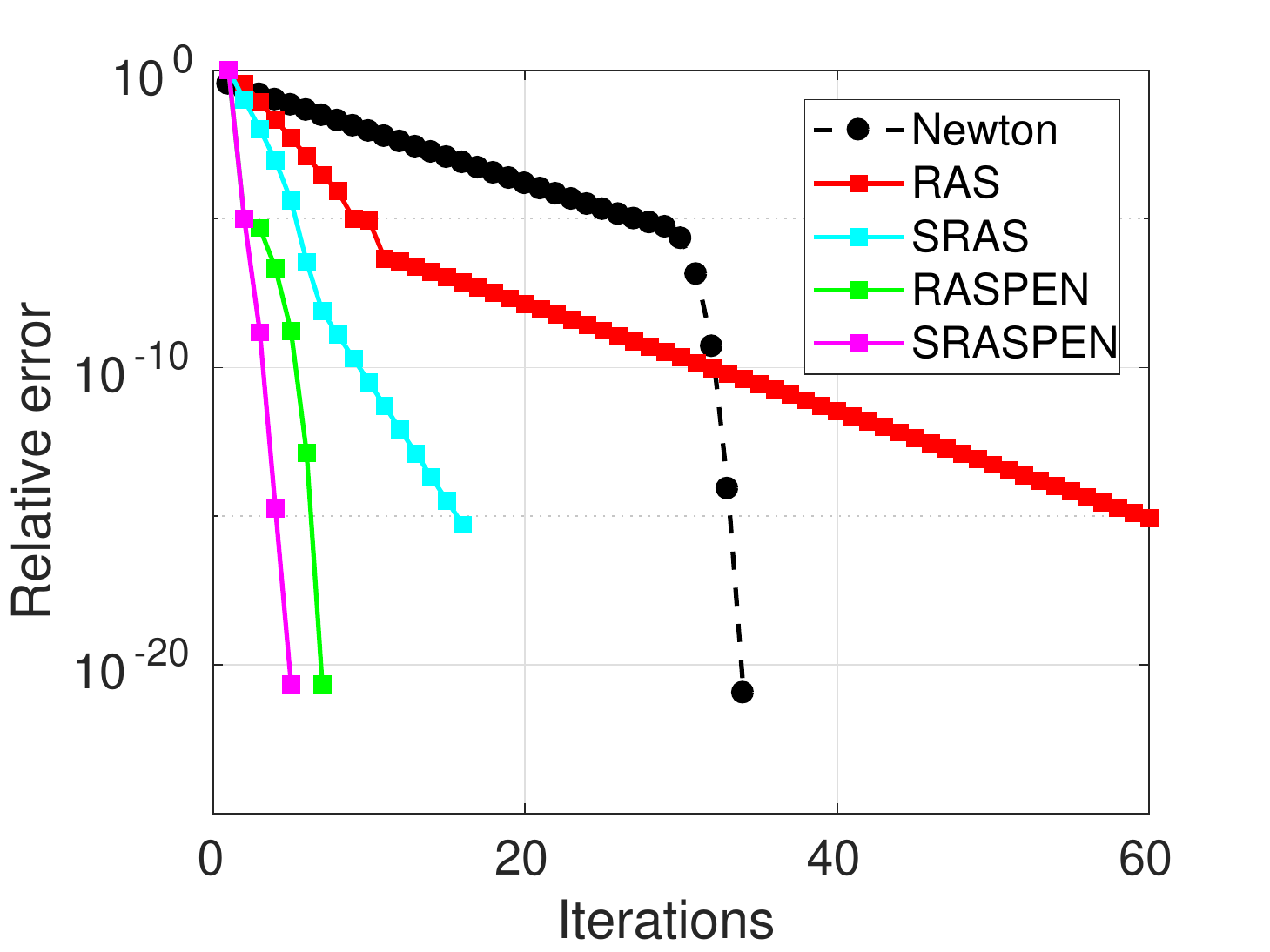}\\
\caption{Relative error decay versus the number of iterations for
  Newton's method, iterative two-level nonlinear RAS and SRAS, and the
  two-level variants of RASPEN and SRASPEN. The left figure
  refers to a decomposition into 4 subdomains, while the right figure
  refers to a decomposition into 16 subdomains. The mesh size is
  $h=0.012$ and the overlap is $4h$.}\label{Fig:NLD_twolevel}
\end{figure}
we note that two-level nonlinear SRAS is much faster than
two-level nonlinear RAS, and this observation is in agreement
with the linear case treated in
\cite{ciaramella-vanzan,ciaramella-vanzan-spectral}. Since the
two-level iterative methods are not equivalent, we also remark that
two-level SRASPEN shows a better performance than two-level
  RASPEN in terms of iteration count. As the one-level smoother
is the same in all methods, the better convergence of the
substructured methods implies that the coarse equation involving
$\overline{\mathcal{F}}_0$ provides a much better coarse correction
than the classical volume one involving $F_0$.

Even though the two-level substructured methods are faster in terms of
iteration count, the solution of the FAS problem involving
$\overline{\mathcal{F}}_0=\overline{R}_0\overline{\mathcal{F}}(\overline{P}_0(\vv_0))$
is rather expensive as it requires to evaluate twice the substructured
function $\overline{\mathcal{F}}$ (each evaluation requires subdomain
solves) to compute the right hand side, to solve a Jacobian system
involving $J_{\overline{\mathcal{F}}_0}$, and to evaluate
$\overline{\mathcal{F}}$ on the iterates, which again require the
solution of subdomain problems. Unless one has a fully parallel
implementation available, the coarse correction involving
$\overline{\mathcal{F}}_0$ is doomed to represent a bottleneck.

\section{Conclusions}      

We presented for the first time an analysis of the effects of
substructuring on RAS when it is applied as an iterative solver
and as a preconditioner. We proved that iterative RAS and iterative
SRAS converge at the same rate, both in the linear and nonlinear
case. For the nonlinear case, we showed that the preconditioned
methods, namely RASPEN and SRASPEN also have the same rate of
convergence as they produce the same iterates once these are
restricted to the interfaces.  Surprisingly, the equivalence
between volume and substructured RAS breaks down when they are
considered as preconditioners for Krylov methods. We showed that the
Krylov spaces are equivalent, once the volume one is restricted
to the substructure, however we obtained that the iterates
are different by carefully deriving the least squares problems
solved by GMRES.  Our analysis shows that GMRES should always be
applied to the substructured system as it converges similarly when
applied to the volume formulation, but needs much less
memory. This allows us to state that, while nonlinear RASPEN
  and SRASPEN produce the same iterates, SRASPEN has advantages when
  solving the Jacobian system, either because the use of a direct
  solve is feasible, or because the Krylov method can work at the
  substructured level.  Finally, we introduced substructured
  two-level nonlinear SRAS and SRASPEN, and showed numerically that
  these methods have better convergence properties than their volume
  counterparts in terms of iteration count, although they are
  quite expensive in the present form per iteration.  Future
  efforts will be in the direction of approximating
  $\overline{\mathcal{F}}_0$, by replacing the function
  $\overline{\mathcal{F}}$, which is defined on a fine mesh, with an
  approximation on a very coarse mesh, thus reducing the overall cost
  of the substructured coarse correction, or by using spectral coarse
  spaces.

\bibliographystyle{siamplain}
\bibliography{references}

\end{document}